\newtheorem{thm}{Theorem}[section]
\newtheorem{prop}[thm]{Proposition}
\theoremstyle{definition}
\newtheorem{defi}[thm]{Definition}
\newtheorem{rem}[thm]{Remark}
\newtheorem{question}[thm]{Question}
\newcommand{\N}{\mathbb{N}}
\newcommand{\Z}{\mathbb{Z}}
\newcommand{\R}{\mathbb{R}}
\newcommand{\C}{\mathbb{C}}
\newcommand{\T}{\mathbb{T}}
\newcommand{\st}{\;:\;}
\newcommand{\sspace}{\cdot}
\newcommand{\ssspace}{\cdot\cdot}
\DeclareMathOperator{\im}{i}
\DeclareMathOperator{\de}{d}
\DeclareMathOperator{\id}{id}
\DeclareMathOperator{\esp}{e}
\DeclareMathOperator{\GL}{GL}
\DeclareMathOperator{\SL}{SL}
\newcommand{\Cpf}{$\mathcal{C}^\infty$-pure-and-full}
\newcommand{\Cf}{$\mathcal{C}^\infty$-full}
\newcommand{\Cp}{$\mathcal{C}^\infty$-pure}
\newcommand{\pf}{pure-and-full}
\newcommand{\f}{full}
\newcommand{\p}{pure}
\newcommand{\del}{\partial}
\newcommand{\delbar}{\overline{\del}}
\title[On Cohomological Decomposability of Almost--K\"ahler Structures]
{On Cohomological Decomposability of Almost--K\"ahler Structures}
\author{Daniele Angella}
\address[Daniele Angella]{Dipartimento di Matematica\\
Universit\`{a} di Pisa \\
Largo Bruno Pontecorvo 5, 56127\\
Pisa, Italy}
\email{angella@mail.dm.unipi.it}
\author{Adriano Tomassini}
\address[Adriano Tomassini]{Dipartimento di Matematica e Informatica\\
Universit\`{a} di Parma \\
Parco Area delle Scienze 53/A, 43124 \\
Parma, Italy}
\email{adriano.tomassini@unipr.it}
\author{Weiyi Zhang}
\address[Weiyi Zhang]{University of Michigan \\
Department of Mathematics \\
1825 East Hall \\
Ann Arbor, MI 48109
}
\email{wyzhang@umich.edu}
\keywords{$\mathcal{C}^\infty$-pure-and-full structure; $J$-anti-invariant cohomology; almost-K\"ahler structure}
\thanks{The first and the second authors are partially supported by GNSAGA of INdAM.}
\subjclass[2000]{53C55; 53C25; 32G05}
\begin{document}

\begin{abstract}
We study the $J$-invariant and $J$-anti-invariant cohomological subgroups of the de Rham cohomology of a compact manifold $M$ endowed with an almost-K\"ahler structure $\left(J,\,\omega,\,g\right)$. In particular, almost-K\"ahler manifolds satisfying a Lefschetz type property, and solvmanifolds endowed with left-invariant almost-complex structures are investigated.
\end{abstract}

\maketitle

\begin{footnotesize}\begin{center}
{\sffamily
\parbox{8cm}{
First published in Proceedings of the American Mathematical Society in 2012, published by the American Mathematical Society.
}}
\end{center}\end{footnotesize}

\section*{Introduction}
Cohomological properties of compact complex, and, more in general, almost-complex, manifolds have been recently studied by many authors, see, e.g., \cite{angella-tomassini-3}, respectively \cite{draghici-li-zhang, draghici-li-zhang-1}, and the references therein. The study of the cohomology of almost-complex manifolds is motivated, in particular, by a question of Donaldson's, \cite[Question 2]{donaldson}, relating the tamed and compatible symplectic cones of a compact $4$-dimensional almost-complex manifold, see, e.g., \cite{li-zhang}, and by the analogous question arising for compact higher dimensional complex manifolds, see \cite[page 678]{li-zhang} and \cite[Question 1.7]{streets-tian}. (We recall that a symplectic structure $\omega$ on a manifold $M$ is said to \emph{tame} an almost-complex structure $J$ if $\omega_x\left(u_x,\,  J_xu_x\right)>0$ for any $x\in M$ and for any $u\in T_xM\setminus\{0\}$, and it is said \emph{compatible} with $J$ if $g:=\omega(\sspace,\, J\ssspace)$ is a $J$-Hermitian metric;
 in the latter case, the triple $\left(J,\, \omega,\, g\right)$ is called an \emph{almost-K\"ahler structure} on $M$.)

\smallskip

Following T.-J. Li and the third author, \cite{li-zhang}, an almost-complex structure $J$ on a $2n$-dimensional manifold $M$ is called {\em \Cpf}\ if
$$
H^2_{dR}(M;\R)=H_J^{(1,1)}(M)_\R\oplus H_J^{(2,0),(0,2)}(M)_\R\,,
$$
where $H_J^{(1,1)}(M)_\R$ and $H_J^{(2,0),(0,2)}(M)_\R$ denote the subgroups of $H^2_{dR}(M;\R)$ whose elements can be represented by forms of type $(1,1)$ and $(2,0)+(0,2)$ respectively. In the notation of T. Dr\v{a}ghici, T.-J. Li, and the third author, \cite{draghici-li-zhang}, $H_J^{(1,1)}(M)_\R=:H^+_J(M)$ and $H_J^{(2,0),(0,2)}(M)_\R=:H^-_J(M)$ are the {\em $J$-invariant} and the {\em $J$-anti-invariant cohomology subgroups} respectively.

In \cite[Theorem 2.3]{draghici-li-zhang}, T. Dr\v{a}ghici, T.-J. Li, and the third author proved that every almost-complex structure on a compact $4$-dimensional manifold is \Cpf. This is no more true in dimension greater than four, see, e.g., \cite[Example 3.3]{fino-tomassini}, see also \cite{angella-tomassini, angella-tomassini-1}.

The groups $H_J^{(1,1)}(M)_\R$ and $H_J^{(2,0),(0,2)}(M)_\R$ appear as a natural generalization of the Dolbeault cohomology groups to the non-integrable case, see, e.g., \cite[Proposition 2.1]{li-zhang}. In fact, compact K\"ahler manifolds are \Cpf, and, in this case, $H_J^{(1,1)}(M)_\R\simeq H^{1,1}_{\overline{\partial}}(M)\cap H_{dR}^2(M;\R)$ and $H_J^{(2,0),(0,2)}(M)_\R\simeq\left(H^{2,0}_{\overline{\partial}}(M)\oplus H^{0,2}_{\overline{\partial}}(M)\right)\cap H_{dR}^2(M;\R)$.

We remark that, on a compact complex manifold, other cohomologies can be defined, namely, the Bott-Chern and Aeppli cohomologies. In \cite{angella-tomassini-3}, the problem of cohomology decomposition in terms of the Bott-Chern cohomology groups is investigated, providing in particular a characterization of compact complex manifolds satisfying the $\del\delbar$-Lemma.

\smallskip

Compact K\"ahler manifolds being \Cpf, in this paper we are interested in the study of the cohomological subgroups $H_J^{(1,1)}(M)_\R$ and $H_J^{(2,0),(0,2)}(M)_\R$ for almost-K\"ahler manifolds.

On the one hand, A. Fino and the second author, \cite[Proposition 3.2]{fino-tomassini}, as well as T. Dr\v{a}ghici, T.-J. Li, and the third author, \cite[Proposition 2.8]{draghici-li-zhang}, proved that the almost-complex structure of a compact almost-K\"ahler manifold is \Cp. On the other hand, we prove the following result, showing therefore a difference between the integrable and the non-integrable cases.

\smallskip
\noindent {\bfseries Proposition \ref{prop:iwasawa-non-full}.\ }{\itshape
  Let $X:=\left. \Z\left[\im\right]^3 \right\backslash \left(\C^3,\,*\right)$ be the real manifold underlying the Iwasawa manifold. Then there exists an almost-K\"ahler structure $\left(J,\,\omega,\,g\right)$ on $X$ which is \Cp\ and non-\Cf.
  Furthermore, the {\em Lefschetz type operator} $\mathcal{L}_\omega:=\omega\wedge \sspace \colon\wedge^2M\to\wedge^{4}M$ of the almost-K\"ahler structure $\left(J,\,\omega,\, g\right)$ does not take $g$-harmonic $2$-forms to $g$-harmonic $4$-forms.
}
\smallskip

In studying cohomological decomposition of the de Rham cohomology of almost-K\"ahler manifolds, the third author introduced a {\em Lefschetz type property} for $2$-forms, see Definition \ref{defi:lefschetz-type-property}. Such a property is stronger than the Hard Lefschetz Condition on $2$-classes, namely, the property that $\left[\omega\right]^{n-2}\smile\sspace \colon H^{2}_{dR}(M;\R)\to H^{2n-2}_{dR}(M;\R)$ is an isomorphism, where $2n:=\dim M$.

We study such a Lefschetz type property on almost-K\"ahler manifolds $\left(M,\, J,\, \omega,\, g\right)$ in relation to the existence of a cohomological decomposition of $H^2_{dR}(M;\R)$. More precisely, we prove the following result.

\smallskip
\noindent {\bfseries Theorem \ref{thm:cf-implies-lefschetz}.\ }{\itshape
 Let $\left(M,\,J,\,\omega,\,g\right)$ be a compact almost-K\"ahler manifold. Suppose that there exists a basis of $H^2_{dR}(X;\R)$ represented by $g$-harmonic $2$-forms which are of pure type with respect to $J$. Then the Lefschetz type property on $2$-forms is satisfied.
}
\smallskip

Note that, by the hypothesis, it follows,in particular, that $J$ is \Cpf\ and \pf, \cite[Theorem 3.7]{fino-tomassini}.
Note also that A. Fino and the second author provided in \cite{fino-tomassini} several examples of compact non-K\"ahler solvmanifolds admitting a basis of harmonic representatives of pure-type with respect to the almost-complex structure.
In \cite[\S2]{draghici-li-zhang-2}, T. Dr\v{a}ghici, T.-J. Li, and the third author ask whether such a Lefschetz type property on $2$-forms is actually equivalent to \Cf ness for every almost-K\"ahler nilmanifold and solvmanifold, without any further assumption; Theorem \ref{thm:cf-implies-lefschetz} and Proposition \ref{prop:iwasawa-non-full} provide results and examples in favour of a possibly positive answer to their question.

\smallskip

In \cite[Theorem 1.1]{draghici-li-zhang-1}, starting with a compact complex surface $\left(M,\,J\right)$, it is shown that the dimension $h^-_{\tilde J}$ of the ${\tilde J}$-anti-invariant cohomology subgroup $H^-_{\tilde{J}}(M)$ of any {\em metric related} almost-complex structure $\tilde{J}$ on $M$ (namely, an almost-complex structure ${\tilde J}$ on $M$ inducing the same orientation as that one induced by $J$ and with a common compatible metric), such that $\tilde{J}\neq\pm J$, can be $0$, $1$, or $2$, and a description of such almost-complex structures $\tilde{J}$ having $h^-_{\tilde J}\in \{1,\, 2\}$ is provided.
Furthermore, it is conjectured that $h^-_J=0$ for a generic almost-complex structure $J$ on a compact $4$-dimensional manifold, and that if $h^-_J\geq 3$, then $J$ is integrable, \cite[Conjecture 2.4, Conjecture 2.5]{draghici-li-zhang-1}. One could set a similar question for higher dimensional manifolds, asking Question \ref{question:large}: {\em are there examples of non-integrable almost-complex structures $J$ on a compact $2n$-dimensional manifold with $h^-_J > n\, (n-1)$?}.

Finally, we prove a Nomizu-type result for the subgroups $H^\pm_J(M)$ of a completely-solvable solvmanifolds $M=\left.\Gamma\middle\backslash G\right.$ endowed with left-invariant almost-complex structures $J$. More precisely, denote the Lie algebra associated to $G$ by $\mathfrak{g}$, and consider
$$ H^{(p,q),(q,p)}_J(\mathfrak{g})_\R \;:=\; \left\{\mathfrak{a}=\left[\alpha\right]\in H^\bullet\left(\wedge^\bullet\mathfrak{g}^*,\, \de\right) \st \alpha \in \wedge^{(p,q),(q,p)}_J\mathfrak{g}^* \right\} \;\subseteq\; H^\bullet_{dR}(M;\R) $$
the subgroup of $H^\bullet_{dR}(M;\R)$ that consists of classes admitting a left-invariant representative of type $(p,q)+(q,p)$, where $\wedge^{(p,q),(q,p)}_J\mathfrak{g}^* := \left(\wedge^{p,q}\left(\mathfrak{g}\otimes_\R\C\right)^*\oplus \wedge^{q,p}\left(\mathfrak{g}\otimes_\R\C\right)^*\right) \cap \wedge^\bullet\mathfrak{g}^*$; then the following result holds.

\smallskip
\noindent {\bfseries Theorem \ref{thm:nomizu-type}.\ }{\itshape
  Let $M=\left.\Gamma\middle\backslash G\right.$ be a solvmanifold endowed with a left-invariant almost-complex structure $J$, and denote the Lie algebra naturally associated to $G$ by $\mathfrak{g}$. For any $p,q\in\N$, the map $j\colon H^{(p,q),(q,p)}_J(\mathfrak{g})_\R \to H^{(p,q),(q,p)}_J(M)_\R$ induced by left-translations is injective, and, if $H_{dR}^\bullet\left(\wedge^\bullet\mathfrak{g}^*,\, \de\right) \simeq H^\bullet_{dR}(M;\R)$ (for instance, if $M$ is a completely-solvable solvmanifold), then $j\colon H^{(p,q),(q,p)}_J(\mathfrak{g})_\R \to H^{(p,q),(q,p)}_J(M)_\R$ is in fact an isomorphism.
}
\smallskip

In particular, it follows that $\dim_\R H^-_J(M) \leq n\,(n-1)$ for every left-invariant almost-complex structure on a completely-solvable solvmanifold.

\smallskip

\noindent{\itshape Acknowledgements.} The authors would like to thank Tedi Dr\v{a}ghici for useful comments and remarks and Tian-Jun Li for helpful discussions. The first author would like to thank also Jean-Pierre Demailly for useful conversations and for his warm hospitality at Insitut Fourier, Universit\'{e} de Grenoble \textsc{i}. The second author would like to thank the Department of Mathematics of University of Notre Dame and the School of Mathematics of University of Minnesota for their warm hospitality. The third author would like to thank Institut des Hautes \'Etudes Scientifiques for providing excellent research environment. We are also pleased to thank the referee for fruitful suggestions and remarks.

\section{$\mathcal{C}^\infty$-pure-and-full almost-complex structures}
\subsection{Subgroups of the de Rham cohomology of an almost-complex manifolds}
We start by fixing some notation and recalling some recent results on cohomological properties of almost-complex manifolds; for more details
see, e.g., \cite{li-zhang, draghici-li-zhang, draghici-li-zhang-1, fino-tomassini, angella-tomassini, angella-tomassini-1, draghici-li-zhang-2}, and the references therein.

Let $J$ be a smooth almost-complex structure on a compact $2n$-dimensional manifold $M$. Denote by $\wedge^rM$ the bundle of $r$-forms on $M$; we denote with the same symbol $\wedge^rM:=\Gamma(M,\wedge^rM)$ the space of smooth global sections of the bundle $\wedge^rM$. Then $J$ extends
to a complex automorphism of $T^\C M=TM\otimes\C$ such that $T^\C M =T^{1,0}_JM\oplus T^{0,1}_JM$, where $T^{1,0}_JM$ and $T^{0,1}_JM$ are the $\left(\pm \im\right)$-eigenbundles. The action of $J$ can be extended to the space $\wedge^r(M;\C)$
of smooth global sections of the bundle $\wedge^r (M;\C):=\wedge^rM\otimes\C$ getting the following decomposition:
$$
\wedge^r(M;\C)=\displaystyle{\bigoplus_{p+q=r}}\wedge^{p,q}_JM\;.
$$
Then the space $\wedge^rM$ of real smooth differential $r$-forms decomposes as
$$
\wedge^rM =\displaystyle{\bigoplus_{p + q = r,\;p\leq q}} \wedge^{(p,q),(q,p)}_J (M)_{\R} \;,
$$
where, for $p<q$, (later on, we do not distinguish the cases $p<q$ and $p=q$,)
$$ \wedge^{(p,q),(q,p)}_J (M)_{\R} := \left\{ \alpha \in \wedge^{p,q}_J M \oplus \wedge^{q,p}_J M
\st \alpha = \overline \alpha \right\} \;, \quad \wedge^{(p,p)}_J (M)_{\R} := \left\{ \alpha \in \wedge^{p,p}_J M
\st \alpha = \overline \alpha \right\} $$
In particular, for $r=2$, we will adopt the following notation:
$$
\wedge^{1,1}_J(M)_\R=:\wedge^+_JM\;,\quad\wedge^{(2,0),(0,2)}_J(M)_\R=:\wedge^-_JM \;;
$$
this is consistent with the decomposition in invariant and anti-invariant part of $\wedge^2M$ under the natural action of $J$
on $\wedge^2M$, given by $J\alpha(\sspace,\,\sspace):=\alpha(J\sspace,\,J\sspace)$.\\
We will refer to forms in $\wedge^{1,1}_J(M)_\R$, respectively $\wedge^{(2,0),(0,2)}_J(M)_\R$ as forms of {\em pure type with respect to $J$}.

\smallskip

For a finite set $S$ of pairs of integers, let
$$
{\mathcal Z}^{S}_J := \displaystyle{\bigoplus_{(p, q) \in S,\;p\leq q}} {\mathcal Z}^{(p,q),(q,p)}_J\;, \qquad
{\mathcal B}^{S}_J := \bigoplus_{(p, q) \in S,\;p\leq q} {\mathcal B}^{(p,q),(q,p)}_J \;,
$$
where
\begin{eqnarray*}
{\mathcal Z}^{(p,q),(q,p)}_J &:=& \left\{\alpha\in\wedge^{(p,q),(q,p)}_J (M)_{\R} \st \de\alpha=0\right\} \;,\\[5pt]
{\mathcal B}^{(p,q),(q,p)}_J &:=& \left\{\beta\in\wedge^{(p,q),(q,p)}_J (M)_{\R} \st
\text{ there exists }\gamma \text{ such that }
\de\gamma =\beta\right\} \;.
\end{eqnarray*}
Define
$$
H^S_J (M)_{\R} := \frac{{\mathcal Z}^{S}_J}{{\mathcal B}^S_J} \;.
$$
Let $\mathcal B$ be the space of $\de$-exact forms. Since $\frac{{\mathcal Z}^{S}_J}{{\mathcal B}^S_J}=\frac{{\mathcal Z}^{S}_J}{\mathcal{B}\cap{\mathcal Z}^{S}_J}$, a natural inclusion $\rho_S\colon \frac{{\mathcal Z}^{S} _J}{{\mathcal B}^{S}_J} \to \frac{{\mathcal Z}^{S}_J}{{\mathcal B}}$ is defined.
As in \cite{li-zhang}, we will write
$\rho_S \left(\frac{{\mathcal Z}^{S}_J}{{\mathcal B}^{S}_J}\right)$ simply as $\frac{{\mathcal Z}^{S}_J}{{\mathcal B}^{S}_J}$
and consequently the cohomology spaces $H^S_J (M)_{\R}$ can be identified as
$$
H^S_J (M)_{\R}= \left\{ [\alpha] \in H^\bullet_{dR}(M;\R) \st \alpha \in {\mathcal Z}^{S} _J \right\} =
\frac {{\mathcal Z}^{S} _J } {\mathcal B} \;.
$$
Therefore, there is a natural inclusion
$$
H^{(1,1)}_J (M)_{\R} + H^{(2,0),(0,2)}_J (M)_{\R} \subseteq
H^{2}_{dR} (M; \R) \;.
$$

\subsection{\Cpf\ and \pf\ almost-complex structures}
As in \cite{li-zhang}, we set the following definition.

\begin{defi}[{\cite[Definition 2.2, Definition 2.3, Lemma 2.2]{li-zhang}}]
An almost-complex structure $J$ on a manifold $M$ is said to be
\begin{itemize}
\item {\em \Cp} if $H^{(1,1)}_J (M)_{\R} \cap H^{(2,0),(0,2)}_J (M)_{\R} = \left\{0\right\}$,
\item {\em \Cf} if $H^{2}_{dR} (M; \R) = H^{(1,1)}_J (M)_{\R} + H^{(2,0),(0,2)}_J (M)_{\R}$,
\item {\em \Cpf} if
$$ H^{2}_{dR} (M; \R) \;=\; H^{(1,1)}_J (M)_{\R} \oplus H^{(2,0),(0,2)}_J (M)_{\R} \;. $$
\end{itemize}
\end{defi}

According to the previous notation, we will write
$$ H^+_J(M) \;:=\; H^{(1,1)}_J (M)_{\R} \;, \qquad H^-_J(M) \;:=\; H^{(2,0),(0,2)}_J (M)_{\R} \;.$$

Similar definitions in terms of currents can be given, introducing the notion of \emph{\pf} almost-complex structure: we refer to \cite[\S2.2.2]{li-zhang} for further details and results. More precisely, on an almost complex manifold $\left(M, \, J\right)$, the space ${\mathcal E}_k (M)_{\R}$ of real $k$-currents has a decomposition ${\mathcal E}_k (M)_{\R} = \bigoplus_{\substack{p + q = k \\ p\leq q}} {\mathcal E}_{(p,q),(q,p)}^J (M)_{\R}$, where ${\mathcal E}_{(p,q),(q,p)}^J (M)_{\R}$ denotes the space of real $k$-currents of bi-dimension $(p,q)+(q,p)$.

Let ${\mathcal Z}_{(2,0),(0,2)}^J$ and ${\mathcal Z}_{(1,1)}^J$ denote the spaces of real $\de$-closed currents of bi-dimension $(2,0)+(0,2)$, respectively $(1,1)$, and ${\mathcal B}_{(2,0),(0,2)}^J$ and ${\mathcal B}_{(1,1)}^J$ denote the spaces of real $\de$-exact currents of bi-dimension $(2,0)+(0,2)$, respectively $(1,1)$. Denote by ${\mathcal B}$ the space of boundaries. Let, as in \cite{li-zhang},
\begin{eqnarray*}
H_{(1,1)}^J (M)_{\R} &:=& \left\{ [\alpha ]\in H_2(M;\R) \st \alpha \in {\mathcal Z}_{(1,1)}^J \right\} \;=\; \frac{{\mathcal Z}_{(1,1)} ^J }
{\mathcal{B}} \;, \\[5pt]
H_{(2,0),(0,2)}^J (M)_{\R} &:=& \left\{ [\alpha ]\in H_2(M;\R) \st \alpha \in {\mathcal Z}_{(2,0),(0,2)}^J \right\} \;=\; \frac {{\mathcal Z}_{(2,0),(0,2)} ^J}{\mathcal{B}} \;.
\end{eqnarray*}

We recall the following definition.

\begin{defi}[{\cite[Definition 2.15, Definition 2.16]{li-zhang}}]
An almost complex structure $J$ on a manifold $M$ is said to be {\em \p} if $H_{(1,1)}^J (M)_{\R} \cap H_{(2,0),(0,2)}^J (M)_{\R} = \left\{0\right\}$. It is said to be {\em \f} if $H_2(M;\R)=H_{(1,1)}^J (M)_{\R} + H_{(2,0),(0,2)}^J (M)_{\R}$. Therefore, an almost complex structure $J$ is {\em \pf} if and only if
$$ H_{2} (M, \R) \;=\; H_{(1,1)}^J (M)_{\R} \oplus H_{(2,0),(0,2)}^J (M)_{\R} \;. $$
\end{defi}

\smallskip

In \cite[Proposition 2.1]{li-zhang} it is shown that, given a compact complex manifold $\left(M,\, J\right)$ of complex dimension $n$,
if $n=2$ or $J$ is K\"ahler, then $J$ is \Cpf, and $H^{(1,1)}_J(M)_\R  \simeq H^{1,1}_{\delbar}(M) \cap H^2_{dR}\left(M;\R\right)$ and $H^{(2,0),(0,2)}_J(M)_\R \simeq \left(H^{2,0}_{\delbar}(M)\oplus H^{0,2}_{\delbar}(M)\right) \cap H^2_{dR}\left(M;\R\right)$.
In view of this result, the subgroups $H^{(1,1)}_J(M)_\R$ and $H^{(2,0),(0,2)}_J(M)_\R$ of the de Rham cohomology can be viewed as an analogue of the Dolbeault cohomology groups for non-integrable almost-complex structures.

In \cite[Theorem 2.3]{draghici-li-zhang} it is proven the following result.
\begin{thm}[{\cite[Theorem 2.3]{draghici-li-zhang}}]\label{thm:dlz}
If $M$ is a compact manifold
of dimension $4$, then any almost-complex structure $J$ on $M$ is \Cpf.
\end{thm}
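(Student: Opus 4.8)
The plan is to show that on a compact $4$-manifold $M$ with almost-complex structure $J$, any de Rham $2$-class decomposes uniquely as a sum of a $J$-invariant class and a $J$-anti-invariant class, and that these two subgroups intersect trivially. First I would fix a $J$-compatible Riemannian metric $g$ (these always exist), which induces the Hodge star $*$ and the space of $g$-harmonic forms; by Hodge theory every de Rham class has a unique harmonic representative. The key structural fact in dimension $4$ is that $*$ acts on $\wedge^2 M$ as an involution, splitting $\wedge^2 M = \wedge^+_g M \oplus \wedge^-_g M$ into self-dual and anti-self-dual forms, and this is compatible with the type decomposition: $\wedge^{1,1}_J(M)_\R = \R\,\omega \oplus \left(\wedge^{1,1}_J(M)_\R \cap \wedge^-_g M\right)$ while $\wedge^{(2,0),(0,2)}_J(M)_\R \subseteq \wedge^+_g M$. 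So in dimension $4$ the metric-harmonic splitting and the $J$-type splitting interact very rigidly.

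The main step is to prove that a $g$-harmonic $2$-form $\alpha$ of pure type is automatically closed and coclosed in a way that respects the bidegree — i.e., that one can represent an arbitrary harmonic $2$-form as a sum $\alpha = \alpha^+ + \alpha^-$ where $\alpha^+$ is harmonic and $J$-invariant and $\alpha^-$ is harmonic and $J$-anti-invariant. The cleanest route: decompose the harmonic representative $\alpha$ of a class under the $J$-action $J\alpha(\cdot,\cdot) = \alpha(J\cdot, J\cdot)$, writing $\alpha = \alpha_{(1,1)} + \alpha_{(2,0)+(0,2)}$. One then shows each piece is still harmonic. Here is where dimension $4$ is essential: the $(2,0)+(0,2)$ part lies in $\wedge^+_g M$, and a self-dual closed $2$-form is harmonic; moreover one shows $\de\alpha_{(2,0)+(0,2)} = 0$ using that $\delbar$ of a $(0,2)$-form on a $4$-manifold lands in $\wedge^{1,2}_J M \oplus \wedge^{0,3}_J M = 0$ for type reasons, together with harmonicity of $\alpha$ forcing the remaining components to vanish. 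Then $\alpha_{(1,1)} = \alpha - \alpha_{(2,0)+(0,2)}$ is also harmonic, hence closed, so $[\alpha] = [\alpha_{(1,1)}] + [\alpha_{(2,0)+(0,2)}]$ exhibits the sum decomposition $H^2_{dR}(M;\R) = H^{(1,1)}_J(M)_\R + H^{(2,0),(0,2)}_J(M)_\R$, i.e. \Cf ness.

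For \Cp ness (trivial intersection), suppose a class $\mathfrak{a}$ is represented both by a closed $J$-invariant form $\beta$ and by a closed $J$-anti-invariant form $\gamma$, so $\beta - \gamma = \de\eta$. Project to harmonic parts: the harmonic representative of $\mathfrak{a}$ is simultaneously $J$-invariant and $J$-anti-invariant (using that the harmonic projection commutes with the bidegree decomposition, as just established), hence zero. Thus $H^{(1,1)}_J(M)_\R \cap H^{(2,0),(0,2)}_J(M)_\R = 0$, and combined with the previous paragraph this gives the direct sum, i.e. \Cpf ness.

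The hard part will be justifying that the $J$-type decomposition of a harmonic $2$-form preserves harmonicity — equivalently, that $\de$ maps the two pure-type pieces into complementary (orthogonal) spaces so that $\de\alpha = 0$ forces $\de\alpha_{(1,1)} = \de\alpha_{(2,0)+(0,2)} = 0$. This is a genuinely $4$-dimensional phenomenon: $\de$ sends $\wedge^{2,0}_J M$ into $\wedge^{3,0}_J M \oplus \wedge^{2,1}_J M$ and $\wedge^{1,1}_J M$ into $\wedge^{2,1}_J M \oplus \wedge^{1,2}_J M$ (the Nijenhuis components shift bidegree by one in the holomorphic or antiholomorphic direction), and on a $4$-manifold $\wedge^{3,0}_J M$ is the full space $\wedge^{3,\bullet}$, so a careful bookkeeping of which components can cancel — together with the self-duality constraint pinning down $\alpha_{(2,0)+(0,2)} \in \wedge^+_g M$ — is what makes the argument go through. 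I would organize this as a lemma on the interaction of $*$, $J$, and $\de$ on $\wedge^2 M$ in real dimension $4$, then deduce the theorem.
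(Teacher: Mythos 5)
Your reduction to the dimension-$4$ linear algebra ($\wedge^-_J M \subseteq \wedge^+_g M$, closed self-dual forms are harmonic) is the right starting point, but the pivotal claim of your second and fourth paragraphs --- that the pointwise $J$-type decomposition $\alpha = \alpha_{(1,1)} + \alpha_{(2,0)+(0,2)}$ of a $g$-harmonic $2$-form has $\de$-closed pieces --- is false, and the ``type reasons'' you offer for it do not hold. On a $4$-manifold one has $\wedge^{0,3}_J M = 0$ but $\wedge^{1,2}_J M \neq 0$ (it has complex rank $2$), and for a non-integrable $J$ the differential of a $(0,2)$-form has components in $\wedge^{2,1}_J M \oplus \wedge^{1,2}_J M$ --- exactly the same space $\wedge^3(M;\C)$ that receives $\de$ of the $(1,1)$-part. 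So $\de\alpha = 0$ in no way forces $\de\alpha_{(2,0)+(0,2)} = 0$: the two pieces can, and generically do, cancel. Indeed, if your claim were true, then whenever $h^-_J = 0$ every harmonic $2$-form would be of type $(1,1)$, hence every self-dual harmonic form would be a pointwise multiple $f\,\omega$ of the fundamental form; closedness of $f\,\omega$ then forces $f$ to be constant (since $\omega\wedge\sspace$ is injective on $1$-forms), giving $b^+ = 1$ for every almost-complex $4$-manifold with $h^-_J = 0$ --- absurd, since generic $J$ on a manifold with $b^+\geq 2$ has $h^-_J=0$. Your argument for \Cp ness leans on the same false commutation of harmonic projection with the type decomposition.

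The proof in Dr\v{a}ghici--Li--Zhang is organized quite differently, and its analytic step cannot be avoided. \Cp ness is a purely cohomological positivity argument: if $\alpha^+ \in \mathcal{Z}^{(1,1)}_J$ and $\alpha^- \in \mathcal{Z}^{(2,0),(0,2)}_J$ represent the same class, then $\alpha^+\wedge\alpha^- = 0$ pointwise (the product would have type $(3,1)+(1,3)$, which vanishes in complex dimension $2$), so $0 = \int_M \alpha^+\wedge\alpha^- = \int_M \alpha^-\wedge\alpha^- = \|\alpha^-\|^2_{L^2}$ by Stokes and self-duality of $\alpha^-$, whence $\alpha^- = 0$. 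For \Cf ness one takes the harmonic representative $\alpha$ of a class, notes that its $g$-anti-self-dual part is automatically of type $(1,1)$, and splits its $g$-self-dual part $L^2$-orthogonally as $\beta+\gamma$ with $\beta\in\mathcal{Z}^{(2,0),(0,2)}_J$ and $\gamma\perp\mathcal{Z}^{(2,0),(0,2)}_J$. The real content is that $\left[\gamma\right]\in H^{(1,1)}_J(M)_\R$, and this is proved not pointwise but by an elliptic-operator argument: one solves $P^-_J\left(\de\theta\right) = -\gamma^-$ for a $1$-form $\theta$, where $P^-_J$ is the projection onto $\wedge^-_JM$ and $\gamma^-:=P^-_J\gamma$; the operator $P^-_J\circ\de$ has closed range whose $L^2$-orthocomplement in $\wedge^-_JM$ is $\ker\de^*\cap\wedge^-_JM = \mathcal{Z}^{(2,0),(0,2)}_J$ (again because $\wedge^-_JM\subseteq\wedge^+_gM$), and $\gamma^-$ is orthogonal to that space by construction. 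Then $\gamma+\de\theta$ is a closed $(1,1)$-form cohomologous to $\gamma$. This Fredholm step is the genuinely four-dimensional heart of the theorem and is what your outline is missing.
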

This is no more true in dimension higher than $4$: in \cite[Example 3.3]{fino-tomassini}, a compact non-\Cp\ almost-complex structure on a $6$-dimensional nilmanifold is constructed. Therefore, the previous theorem can be considered a sort of Hodge decomposition theorem in the non-K\"ahler case.

\section{Cohomological properties of almost-K\"ahler manifolds}\label{sec:cohomological-symplectic-properties}
\subsection{Lefschetz type property on almost-K\"ahler manifolds with pure-type harmonic representatives}
Given a compact $2n$-dimensional almost-K\"ahler manifold $\left(M,\, J,\, \omega,\, g\right)$, we are interested in studying the property of being \Cpf.

\smallskip

Firstly we recall the following result.
\begin{prop}[{\cite[Proposition 2.8]{draghici-li-zhang}, \cite[Proposition 3.2]{fino-tomassini}}]\label{prop:almK-Cp}
If $J$ is an almost-complex structure on a compact manifold $M$ and $J$ admits a compatible symplectic structure, then $J$ is \Cp.
\end{prop}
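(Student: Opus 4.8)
The plan is to show that if $J$ admits a compatible symplectic structure $\omega$, then $H^+_J(M) \cap H^-_J(M) = \{0\}$, i.e.\ that no nonzero de Rham class can be represented both by a $d$-closed $(1,1)$-form and by a $d$-closed $(2,0)+(0,2)$-form. The key observation is that the almost-K\"ahler metric $g$ associated to $(J,\omega)$ gives, at each point, an inner product on $2$-forms for which the decomposition $\wedge^2_xM = \wedge^{+}_{J,x}M \oplus \wedge^{-}_{J,x}M$ is orthogonal, and moreover the symplectic form $\omega$ itself sits inside $\wedge^+_JM$ and pairs nontrivially with the $(1,1)$-part. So the strategy is: take a class $\mathfrak{a} \in H^+_J(M) \cap H^-_J(M)$, represent it by $\alpha^+ \in \mathcal{Z}^{(1,1)}_J$ and by $\alpha^- \in \mathcal{Z}^{(2,0),(0,2)}_J$, so that $\alpha^+ - \alpha^- = d\gamma$ for some $1$-form $\gamma$, and then integrate a suitable $2n$-form against $M$ to force $\alpha^+ = 0$ in cohomology.

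First I would recall that, by Stokes' theorem on the compact manifold $M$, for any $d$-closed $(n-1)$-form $\eta$ one has $\int_M d\gamma \wedge \eta = 0$; applying this with $\eta = \omega^{n-1}$ (which is $d$-closed since $\omega$ is symplectic) gives $\int_M \alpha^+ \wedge \omega^{n-1} = \int_M \alpha^- \wedge \omega^{n-1}$. Next I would compute the right-hand side pointwise: $\alpha^-$ is of type $(2,0)+(0,2)$ and $\omega^{n-1}$ is of type $(n-1,n-1)$, so $\alpha^- \wedge \omega^{n-1}$ has no component of type $(n,n)$ and hence vanishes as a $2n$-form; thus $\int_M \alpha^- \wedge \omega^{n-1} = 0$. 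For the left-hand side, $\alpha^+$ is a real $(1,1)$-form and $g$-compatibility of $\omega$ means precisely that, pointwise, $\alpha^+ \wedge \omega^{n-1}$ equals $\tfrac{1}{n}\Lambda_\omega(\alpha^+)\,\omega^n$ up to the usual combinatorial constant, where $\Lambda_\omega$ is the contraction; the point is that the linear functional $\alpha^+ \mapsto \int_M \alpha^+ \wedge \omega^{n-1}$ is, up to a positive constant, $\int_M \langle \alpha^+, \omega \rangle_g \, \frac{\omega^n}{n!}$ — but I only need that this is a nondegenerate pairing restricted to $d$-closed $(1,1)$-forms modulo exact ones, which follows from the Hard Lefschetz map $[\omega]^{n-1}\smile\cdot$ being injective on $H^1$... actually, more directly, I would avoid Hard Lefschetz and argue as in \cite{draghici-li-zhang, fino-tomassini}: iterate the Stokes argument. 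Concretely, write $\alpha^+ = \alpha^- + d\gamma$; then $[\alpha^+]^2 = [\alpha^+]\smile[\alpha^-]$ in $H^4_{dR}$, and wedging with $[\omega]^{n-2}$ and integrating, $\int_M \alpha^+\wedge\alpha^+\wedge\omega^{n-2} = \int_M \alpha^+ \wedge \alpha^- \wedge \omega^{n-2}$; but $\alpha^+\wedge\alpha^-\wedge\omega^{n-2}$ is again purely of type $(2,0)+(0,2)$ wedged into $(n-2,n-2)+(1,1)=(n-1,n-1)$ plus lower, so its $(n,n)$-component vanishes, giving $\int_M \alpha^+\wedge\alpha^+\wedge\omega^{n-2} = 0$.

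The main point — and the step I expect to require the most care — is extracting a contradiction with $\alpha^+ \neq 0$ from $\int_M \alpha^+ \wedge \alpha^+ \wedge \omega^{n-2} = 0$ together with $\int_M \alpha^+\wedge\omega^{n-1}=0$. Here I would invoke the pointwise algebraic fact that on a symplectic vector space with compatible $J$ and metric $g$, a primitive real $(1,1)$-form $\beta$ satisfies $\beta \wedge \beta \wedge \omega^{n-2} = -\,|\beta|_g^2 \, \frac{\omega^n}{(n-2)!}$ (the wrong-sign Hodge--Riemann relation in the primitive $(1,1)$-part), while the non-primitive part is a multiple of $\omega$ and contributes only through $\int_M \alpha^+\wedge\omega^{n-1}$, which we have shown is zero. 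Writing $\alpha^+ = \beta + f\omega$ with $\beta$ pointwise primitive and $f$ a function, the two integral identities combine to give $\int_M |\beta|_g^2 \,\omega^n = 0$ and $\int_M f\,\omega^n = 0$; positivity of $\omega^n$ then forces $\beta \equiv 0$, and... $f$ need not vanish pointwise, but then $\alpha^+ = f\omega$ is $d$-closed, so $df\wedge\omega = 0$, which on a compact manifold forces $f$ constant (contract with $\omega^{n-1}$ and use $\int_M f\,\omega^n=0$ to get $f\equiv 0$), hence $\alpha^+ = 0$ and $\mathfrak{a}=0$. This is exactly the computation carried out in \cite[Proposition 2.8]{draghici-li-zhang} and \cite[Proposition 3.2]{fino-tomassini}; the delicate part is justifying the pointwise Hodge--Riemann sign and the primitive decomposition in the almost-K\"ahler (non-integrable) setting, but these are purely linear-algebraic facts at each point and do not use integrability of $J$.
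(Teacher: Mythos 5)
Your overall strategy (Stokes' theorem plus pointwise Hodge--Riemann relations for the compatible triple) is the right one, but you run the positivity argument on the wrong side of the decomposition, and the key deduction fails. From $\int_M \alpha^+\wedge\omega^{n-1}=0$ and $\int_M\alpha^+\wedge\alpha^+\wedge\omega^{n-2}=0$, writing $\alpha^+=\beta+f\omega$ with $\beta$ pointwise primitive, what you actually obtain is $\int_M f\,\omega^n=0$ and $c\,\int_M\left|\beta\right|_g^2\,\omega^n=\int_M f^2\,\omega^n$ with $c>0$: the primitive part enters the second integral with a negative sign and the $f\omega$ part with a positive sign, so the quadratic form $\alpha\mapsto\int_M\alpha\wedge\alpha\wedge\omega^{n-2}$ is \emph{indefinite} on real $(1,1)$-forms and its vanishing forces nothing. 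Your claim that ``the two integral identities combine to give $\int_M\left|\beta\right|_g^2\,\omega^n=0$'' is therefore unjustified, and in fact the conclusion you are driving at, namely $\alpha^+\equiv 0$ pointwise, cannot be reached by any argument: on a K\"ahler manifold the form $\alpha^+=\de\de^c\phi$ is a nonzero $\de$-closed real $(1,1)$-form representing the zero class, and it satisfies both of your integral identities. At best one can hope to prove $\left[\alpha^+\right]=0$, not $\alpha^+=0$.

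The repair, which is exactly what the cited proofs in Dr\v{a}ghici--Li--Zhang and Fino--Tomassini do, is to run the same computation on the \emph{anti-invariant} representative, where the relevant quadratic form is definite. Write $\alpha^-=\beta+\bar\beta$ with $\beta$ of type $(2,0)$. Since $\alpha^+\wedge\alpha^-\wedge\omega^{n-2}$ has no $(n,n)$-component and $\alpha^+=\alpha^-+\de\gamma$ with $\alpha^-$ and $\omega$ closed,
$$ 0 \;=\; \int_M\alpha^+\wedge\alpha^-\wedge\omega^{n-2} \;=\; \int_M\alpha^-\wedge\alpha^-\wedge\omega^{n-2} \;=\; c_n\int_M\left|\beta\right|_g^2\,\omega^n \;, \qquad c_n>0\;, $$
because the $(n,n)$-component of $\alpha^-\wedge\alpha^-\wedge\omega^{n-2}$ is $2\,\beta\wedge\bar\beta\wedge\omega^{n-2}$, every $(2,0)$-form is automatically primitive, and the Hodge--Riemann sign $\im^{2-0}(-1)^{1}=+1$ makes $\beta\wedge\bar\beta\wedge\omega^{n-2}$ pointwise nonnegative (a purely linear-algebraic fact at each point, as you note, requiring no integrability). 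Hence $\alpha^-\equiv 0$ and $\mathfrak{a}=\left[\alpha^-\right]=0$. Your first identity $\int_M\alpha^+\wedge\omega^{n-1}=0$ is correct but ends up playing no role.
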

Furthermore, A. Fino and the second author proved that an almost-K\"ahler manifold admitting a basis of harmonic $2$-forms whose elements are of pure type with respect to the almost-complex structure is \Cpf\ and \pf, \cite[Theorem 3.7]{fino-tomassini}; they also provided several examples of compact non-K\"ahler solvmanifolds satisfying the above assumption in \cite{fino-tomassini}.

\smallskip

To the purpose of studying the property of being \Cpf\ on almost-K\"ahler manifolds, we recall the following definition.
\begin{defi}\label{defi:lefschetz-type-property}
Given a compact $2n$-dimensional symplectic manifold $\left(M,\,\omega\right)$, denote by
$$
\mathcal{L}_\omega\colon\wedge^2M\to\wedge^{2n-2}M\,,\qquad \mathcal{L}_\omega(\alpha):=\omega^{n-2}\wedge \alpha\,,
$$
the {\em Lefschetz type operator} (on $2$-forms) associated with $\omega$. \newline 
Then one says that the compact $2n$-dimensional almost-K\"ahler manifold $\left(M,\,J,\, \omega,\, g\right)$ satisfies the {\em Lefschetz type property (on $2$-forms)} if $\mathcal{L}_\omega$ 
takes $g$-harmonic $2$-forms to $g$-harmonic $(2n-2)$-forms.
\end{defi}

Furthermore, we recall some notions and results from \cite{brylinski, mathieu, yan}, see also \cite{merkulov, cavalcanti}.\\
Let $\left(M,\,\omega\right)$ be a compact $2n$-dimensional symplectic manifold. Extend $\omega^{-1}\colon T^*M\to TM$ to the whole exterior algebra of $T^*M$. For any $k\in\N$, the \emph{symplectic $\star_\omega$ operator} is defined as
$$ \star_\omega\colon\wedge^{k}M\to \wedge^{2n-k}M \;, \qquad \beta\wedge\star_\omega\alpha\;=\;\omega^{-1}\left(\alpha,\beta\right)\,\frac{\omega^n}{n!} \;, \quad \forall \alpha,\,\beta\in\wedge^kM \;.$$
One can prove that $\star_\omega^2=\id_{\wedge^\bullet M}$, \cite[Lemma 2.1.2]{brylinski}.\\
For any $k\in\N$, define the \emph{symplectic co-differential operator}
$$ \delta_\omega\colon \wedge^kM\to\wedge^{k-1}M\;,\qquad \delta_\omega\lfloor_{\wedge^kM}\;:=\; \left(-1\right)^{k+1}\,\star_\omega\,\de\,\star_\omega \;;$$
this operator has been studied by J.-L. Brylinski in \cite{brylinski} for Poisson manifolds; in the context of generalized complex geometry, see, e.g., \cite{gualtieri}, it can be interpreted as the symplectic counterpart of the operator $\de^c:=-\im\left(\del-\delbar\right)$ in complex geometry, see \cite{cavalcanti}.\\
By definition, $\left(M,\,\omega\right)$ satisfies the \emph{Hard Lefschetz Condition} if, for each $k\in\N$, the map
$$ \left[\omega\right]^k\smile \sspace \colon H^{n-k}_{dR}(M;\R)\to H^{n+k}_{dR}(M;\R) $$
is an isomorphism. O. Mathieu, \cite[Corollary 2]{mathieu}, and, independently, D. Yan, \cite[Theorem 0.1]{yan}, proved that, given a compact symplectic manifold $\left(M,\,\omega\right)$, any de Rham cohomology class has a (possibly non-unique) \emph{$\omega$-symplectically harmonic representative} (that is, a $\de$-closed $\delta_\omega$-closed representative) if and only if the Hard Lefschetz Condition holds.

\smallskip

We can now prove the following result.
\begin{thm}\label{thm:cf-implies-lefschetz}
 Let $\left(M,\,J,\,\omega,\,g\right)$ be a compact almost-K\"ahler manifold. Suppose that there exists a basis of $H^2_{dR}(X;\R)$ represented by 
 $g$-harmonic $2$-forms which are of pure type with respect to $J$. Then the Lefschetz type property on $2$-forms is satisfied.
\end{thm}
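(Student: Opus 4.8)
The plan is to show that if $\eta$ is any $g$-harmonic $2$-form, then $\mathcal{L}_\omega \eta = \omega^{n-2} \wedge \eta$ is again $g$-harmonic. Since being $g$-harmonic is a linear condition, and by hypothesis $H^2_{dR}(M;\R)$ has a basis of $g$-harmonic representatives of pure type, it suffices to verify that $\mathcal{L}_\omega$ sends each of these pure-type harmonic basis elements to a harmonic $(2n-2)$-form; every $g$-harmonic $2$-form is a linear combination of them, because on a compact Riemannian manifold the space of $g$-harmonic $2$-forms has dimension $b_2(M)$ and therefore coincides with the span of the chosen basis. So the theorem reduces to the two cases $\eta \in \wedge^{1,1}_J(M)_\R$ harmonic and $\eta \in \wedge^{(2,0),(0,2)}_J(M)_\R$ harmonic.

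First I would treat the case $\eta$ of type $(1,1)$. Here the key point is that on an almost-K\"ahler manifold the almost-complex structure $J$ acts as an isometry of $g$ on forms, and $\omega$ itself is $g$-harmonic and of type $(1,1)$; moreover, because $d\omega = 0$, exterior multiplication by $\omega$ commutes with $d$, and one checks that for primitive-type considerations the Hodge star of $\omega^{n-2}\wedge\eta$ can be computed via the symplectic/Riemannian Weil relations on an almost-K\"ahler manifold (the pointwise Lefschetz $\mathfrak{sl}(2)$-decomposition together with the compatibility of $g$, $J$, $\omega$). Concretely, I would write $*(\omega^{n-2}\wedge\eta)$ in terms of $\omega$, $\eta$ and the primitive part of $\eta$ using the almost-K\"ahler Weil identity, then apply $d$ and use $d\eta = 0$, $d\omega = 0$, together with $d*\eta = 0$ (harmonicity of $\eta$) to conclude $d*(\omega^{n-2}\wedge\eta) = 0$; since $d(\omega^{n-2}\wedge\eta) = 0$ is automatic, this gives harmonicity.

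For the case $\eta$ of type $(2,0)+(0,2)$ I would argue similarly but note the extra input: by Proposition \ref{prop:almK-Cp} (and the surrounding discussion) an anti-invariant harmonic $2$-form on a compact almost-K\"ahler manifold satisfies $\omega \wedge \eta = 0$ after one more wedge, or more precisely one uses that $\eta$ is pointwise orthogonal to $\omega$ and hence behaves like a primitive form under the Lefschetz decomposition, so that $*(\omega^{n-2}\wedge\eta)$ is, up to sign, $\omega^{n-2}\wedge\eta$ again (a $(2n-2)$-form whose star is controlled purely by the primitive piece); then $d*(\omega^{n-2}\wedge\eta) = \pm\,\omega^{n-2}\wedge d\eta = 0$ using $d\eta=0$ and $d\omega=0$. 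The main obstacle, and the step I would spend the most care on, is establishing the precise form of the Hodge star of $\omega^{n-2}\wedge\eta$ on an almost-K\"ahler (not K\"ahler) manifold: the usual Weil/Hodge--Riemann bilinear relations are stated for K\"ahler manifolds, and here integrability fails, so I must either invoke the pointwise (algebraic) version of these identities — which only depends on the compatible triple $(g,J,\omega)$ at each point and survives without integrability — or derive the needed identity $* (\omega^{n-2}\wedge \alpha) = \omega^{n-2}\wedge \alpha$ for $\alpha$ primitive of degree $2$ directly from the symplectic $\star_\omega$ operator and the relation between $\star_\omega$ and $*$ on an almost-K\"ahler manifold. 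Once that pointwise identity is in hand, the differential computation closing the argument is routine.
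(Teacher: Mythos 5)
Your argument is correct in substance, but it follows a genuinely different route from the paper's. The paper's proof is operator-theoretic: it invokes Brylinski's identity $\star_\omega = *_g\, J$ to deduce that, \emph{for pure-type forms}, $g$-harmonicity is equivalent to $\omega$-symplectic harmonicity (i.e.\ $\de$-closed and $\delta_\omega$-closed), and then applies Yan's commutation relations $\left[\mathcal{L}_\omega,\de\right]=0$ and $\left[\mathcal{L}_\omega,\delta_\omega\right]=\de$ to conclude that $\mathcal{L}_\omega$ preserves symplectic harmonicity; since $\omega^{n-2}\wedge\sspace$ also preserves pure type, one converts back to $g$-harmonicity. You instead compute $*_g\left(\omega^{n-2}\wedge\eta\right)$ directly via the pointwise Weil/Lefschetz identities for a compatible triple $\left(g,J,\omega\right)$ --- which, as you correctly stress, are purely algebraic and survive without integrability --- and then differentiate. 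Your reduction to pure-type harmonic basis elements is fine (they are linearly independent as forms and there are $b_2$ of them, so they span the harmonic $2$-forms), and both cases close: for $\eta$ of type $(2,0)+(0,2)$ the form is pointwise primitive (for bidegree reasons: $\Lambda$ has bidegree $(-1,-1)$, not because of Proposition \ref{prop:almK-Cp}, whose invocation here is a red herring), so $*_g\left(\omega^{n-2}\wedge\eta\right)=\pm\left(n-2\right)!\,\eta$ --- note it is a multiple of $\eta$ itself, not of $\omega^{n-2}\wedge\eta$ as your sign remark suggests, though the conclusion $\de *_g\left(\omega^{n-2}\wedge\eta\right)=0$ is unaffected. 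The one step you should spell out is the $(1,1)$ case: writing $\eta=\eta_0+\frac{\Lambda\eta}{n}\,\omega$ with $\eta_0$ primitive, the Weil identity gives $*_g\left(\omega^{n-2}\wedge\eta\right)=-\left(n-2\right)!\,\eta + c\,\left(\Lambda\eta\right)\omega$, so harmonicity of the image requires $\de\left(\Lambda\eta\right)=0$; this does follow from $\de *_g\eta=0$ together with $\de\eta=0$ (one finds $\de\left(\Lambda\eta\right)\wedge\omega^{n-1}=0$ and uses injectivity of $\omega^{n-1}\wedge\sspace$ on $1$-forms), and you list $\de *_g\eta=0$ among your inputs, but the trace-constancy argument is the real content of that case and deserves to be written out. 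What each approach buys: the paper's proof is shorter and hides all pointwise linear algebra inside the two cited identities; yours is more self-contained and makes transparent exactly where pure type (hence primitivity or constancy of the trace) enters.
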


\begin{proof}
Recall that, on a $2n$-dimensional almost-K\"ahler manifold $\left(M,\,J,\,\omega,\,g\right)$, the Hodge $*_g$ operator and the symplectic $\star_\omega$ operator are related by $\star_\omega \;=\; *_g\,J$, \cite[Theorem 2.4.1, Remark 2.4.4]{brylinski}. Therefore, for forms of pure type with respect to $J$, the properties of being $g$-harmonic and of being $\omega$-symplectically harmonic are equivalent. The theorem follows noting that, \cite[Lemma 1.2]{yan}, $\left[\mathcal{L}_\omega,\,\de\right] = 0$ and $\left[\mathcal{L}_\omega,\,\delta_\omega\right] = \de$, hence $\mathcal{L}_\omega$ sends $\omega$-symplectically harmonic $2$-forms (of pure type with respect to $J$) to $\omega$-symplectically harmonic $(2n-2)$-forms (of pure type with respect to $J$).
\end{proof}

\smallskip

\begin{rem} We note that if $\left(M,\,J,\,\omega, \,g \right)$ is a compact $2n$-dimensional almost-K\"ahler manifold satisfying the 
 Lefschetz type property on $2$-forms and $J$ is \Cf, then $J$ is \Cpf\ and \pf.

Indeed, we have already remarked that $J$ is \Cp, see Proposition \ref{prop:almK-Cp}. Moreover, since $J$ is 
 \Cf, $J$ is also \p\ by \cite[Proposition 2.5]{li-zhang}. We recall now the argument in \cite{fino-tomassini} to prove that $J$ is also \f.

Firstly, note that if the Lefschetz type property on $2$-forms holds, then 
$\left[\omega^{n-2}\right]\smile\sspace\colon H^{2}_{dR}\left(M;\R\right)\to H^{2n-2}_{dR}\left(M;\R\right)$ 
is an isomorphism.
Therefore, we get that
$$ H^{2n-2}_{dR}(M;\R) \;=\; H^{(n,n-2),(n-2,n)}_J(M)_\R + H^{(n-1,n-1)}_J(M)_\R \;; $$
indeed, (following the argument in \cite[Theorem 4.1]{fino-tomassini},) since 
$\left[\omega^{n-2}\right]\smile\sspace\colon H^2_{dR}(M;\R)\to H^{2n-2}_{dR}(M;\R)$ is in particular surjective, we have
\begin{eqnarray*}
H^{2n-2}_{dR}(M;\R) &=& \left[\omega^{n-2}\right]\smile H^{2}_{dR}(M;\R) \;=\; \left[\omega^{n-2}\right] \smile \left(H^{(2,0),(0,2)}_J(M)_\R\oplus H^{(1,1)}_J(M)_\R\right) \\[2pt]
&\subseteq& H^{(n,n-2),(n-2,n)}_J(M)_\R + H^{(n-1,n-1)}_J(M)_\R \;,
\end{eqnarray*}
yielding the above decomposition of $H^{2n-2}_{dR}(M;\R)$.
Then, it follows that $J$ is also \f, see, for example, \cite[Theorem 2.1]{angella-tomassini}.
\end{rem}

\subsection{A family of almost-K\"ahler manifolds satisfying the Lefschetz type property on $2$-forms}
Let $\mathfrak{n}$ be the $6$-dimensional nilpotent Lie algebra whose structure equations, with respect to a basis
$\left\{e^j\right\}_{j\in\{1,\ldots,6\}}$ of $\mathfrak{n}^*$, are given by
$$ \de e^1 \;=\; \de e^2 \;=\; \de e^3 \;=\; 0 \;, \qquad \de e^4 \;=\; e^{23} \;, \qquad \de e^5 \;=\; e^{13} \;, \qquad \de e^6 \;=\; e^{12}
$$
(where we write $e^{jk}$ instead of $e^j\wedge e^k$). Using a result by Mal'tsev, \cite[Theorem 7]{malcev}, the connected simply-connected
Lie group $G$ associated with $\mathfrak{n}$ admits a discrete co-compact subgroup $\Gamma$: let $N:=\left.\Gamma\right\backslash G$
be the (compact) nilmanifold obtained as a quotient of $G$ by $\Gamma$. Note that $N$ is not formal by a theorem of K. Hasegawa's, \cite[Theorem 1, Corollary]{hasegawa-89}.

\smallskip

\noindent Fix $\alpha>1$ and take
$$ \omega_\alpha \;:=\; e^{14}+\alpha\cdot e^{25}+\left(\alpha-1\right)\cdot e^{36} \;;$$
since $\de\omega_\alpha=0$ and $\omega_\alpha^3\neq 0$, we get that $\omega_\alpha$ is a left-invariant symplectic form on $N$. Set
$$
\begin{array}{lll}
J_\alpha \, e_1 \;:=\; e_4 \;, & J_\alpha\, e_2\;:=\;\alpha\, e_5\;, &
J_\alpha \, e_3\;:=\;(\alpha -1)\,e_6\;,\,\,\,\,\vspace{.2cm}\\
J_\alpha \,e_4\;:=\;-e_1\;,&
J_\alpha \,e_5\;:=\;-\frac{1}{\alpha}\,e_2\;, &
J_\alpha\, e_6\;:=\;-\frac{1}{\alpha -1}\,e_3\;,
\end{array}
$$
where $\left\{e_1,\, \ldots,\, e_6\right\}$ denotes the global dual frame of $\left\{e^1, ,\ldots,\, e^6\right\}$ on N. It is immediate to check that
\begin{itemize}
\item setting $g_{\alpha}(\cdot,\cdot)\,:=\,\omega_\alpha(\cdot, J_\alpha\cdot)$, the triple $\left(J_\alpha,\,\omega_\alpha,\,g_{\alpha}\right)$ gives rise to a family of left-invariant almost-K\"ahler structures on $N$;
\item denoting by
\begin{eqnarray*}
 E_\alpha^1 \;:=\; e^1 \;, \qquad & E_\alpha^2 \;:=\; \alpha\, e^2 \;, & \qquad E_\alpha^3 \;:=\; (\alpha-1)\, e^3 \;, \\[5pt]
 E_\alpha^4 \;:=\; e^4 \;, \qquad & E_\alpha^5 \;:=\; e^5 \;,          & \qquad E_\alpha^6 \;:=\; e^6\,,
\end{eqnarray*}
then $\left\{E_\alpha^1,\ldots,E_\alpha^6\right\}$ is a $g_{\alpha}$-orthonormal co-frame on $N$; with respect to this new co-frame, we easily obtain the following structure equations:
$$  \de E_\alpha^1 = \de E_\alpha^2 = \de E_\alpha^3 = 0 , \quad \de E_\alpha^4 = \frac{1}{\alpha(\alpha -1)} \, E_\alpha^{23} , \quad \de E_\alpha^5 = \frac{1}{\alpha -1}\, E_\alpha^{13} , \quad \de E_\alpha^6 = \frac{1}{\alpha}\,E_\alpha^{12} . $$
\end{itemize}
Then,
$$
\varphi_\alpha^1 := E_\alpha^1+\im E_\alpha^4 \,,\qquad
\varphi_\alpha^2 := E_\alpha^2+\im E_\alpha^5 \,,\qquad
\varphi_\alpha^3 := E_\alpha^3+\im E_\alpha^6\;,
$$
are $(1,0)$-forms with respect to the almost-complex structure $J_\alpha$, and
$$
\omega_\alpha \;=\; E_\alpha^{14} + E_\alpha^{25} +E_\alpha^{36} \;.
$$

\smallskip

\noindent By a result of K. Nomizu's, \cite[Theorem 1]{nomizu}, see Theorem \ref{thm:nomizu-hattori}, the de Rham cohomology of $N$ is straightforwardly computed:
$$  H^2_{dR}(N;\R) \;\simeq\; \R\left\langle E_\alpha^{15},\; E_\alpha^{16},\; E_\alpha^{24},\; E_\alpha^{26},\; E_\alpha^{34},\; E_\alpha^{35},\; E_\alpha^{14}+\frac{1}{\alpha}\,E_\alpha^{25},\; \frac{1}{\alpha}\,E_\alpha^{25}+\frac{1}{\alpha-1}\,E_\alpha^{36} \right\rangle $$
(where we have listed the $g_{\alpha}$-harmonic representatives instead of their classes). \newline
Note that the listed $g_{\alpha}$-harmonic representatives of $H^2_{dR}(N;\R)$ are of pure type with respect to $J_\alpha$: hence, the almost-complex structure $J_\alpha$ is \Cpf\ by
\cite[Theorem 3.7]{fino-tomassini}; in particular, note that
\begin{eqnarray*}
 H^2_{dR}(N;\R) & \simeq & \R\left\langle \im\,\alpha\,\varphi_\alpha^{1\bar1}+\im\,\varphi_\alpha^{2\bar2},\; \im\,(\alpha-1)\,\varphi_\alpha^{2\bar2}+
 \im\,\alpha\,\varphi_\alpha^{3\bar3},\;\Im\mathfrak{m}\,\varphi_\alpha^{1\bar2},\;\Im\mathfrak{m}\,\varphi_\alpha^{1\bar3},\;\Im\mathfrak{m}\,\varphi_\alpha^{3\bar2} \right\rangle \\[2pt]
&&\oplus \left\langle \Im\mathfrak{m}\,\varphi_\alpha^{12},\;\Im\mathfrak{m}\,\varphi_\alpha^{13},\;\Im\mathfrak{m}\,\varphi_\alpha^{23} \right\rangle \;,
\end{eqnarray*}
hence $h^+_{J_\alpha}(N) = 5$ and $h^-_{J_\alpha}(N) = 3$.

Moreover, one explicitly notes that
$$
\begin{array}{rccclrcccl}
 \mathcal{L}_{\omega_\alpha}E_\alpha^{15} &=& E_\alpha^{1536} &=& *_{g_{\alpha}}\, E_\alpha^{24}\,, \qquad &\mathcal{L}_{\omega_\alpha}E_\alpha^{16} &=& E_\alpha^{1625} &=& *_{g_{\alpha}}\, E_\alpha^{34} \;,\\[2pt]
 \mathcal{L}_{\omega_\alpha}E_\alpha^{24} &=& E_\alpha^{2436} &=& *_{g_{\alpha}}\, E_\alpha^{15}\,, \qquad &
 \mathcal{L}_{\omega_\alpha}E_\alpha^{26} &=& E_\alpha^{2614} &=& *_{g_{\alpha}}\, E_\alpha^{35} \;,\\[2pt]
 \mathcal{L}_{\omega_\alpha}E_\alpha^{34} &=& E_\alpha^{3425} &=& *_{g_{\alpha}}\, E_\alpha^{16}\,, \qquad &
 \mathcal{L}_{\omega_\alpha}E_\alpha^{35} &=& E_\alpha^{3514} &=& *_{g_{\alpha}}\, E_\alpha^{26} \;,
\end{array}
$$
while
$$ \mathcal{L}_{\omega_\alpha}\,\left(E_\alpha^{14}+\frac{1}{\alpha}\,E_\alpha^{25}\right)\;=\; -\frac{\alpha+1}{\alpha}\, E_\alpha^{1245}-\frac{1}{\alpha}\, E_\alpha^{2356}-E_\alpha^{1346} $$
where
\begin{eqnarray*}
\de\, *_{g_{\alpha}}\,\mathcal{L}_{\omega_\alpha}\,\left(E_\alpha^{14}+\frac{1}{\alpha}\,E_\alpha^{25}\right) &=& \de\left(-\frac{\alpha+1}{\alpha}\, E_\alpha^{36}-
E_\alpha^{25}-\frac{1}{\alpha}\,E_\alpha^{14}\right)=0 \;,
\end{eqnarray*}
and, by a similar computation, $\de\, *_{g_{\alpha}}\,\mathcal{L}_{\omega_\alpha}\,\left(e^{25}+e^{36}\right) = 0$.
This proves explicitly that $\omega_\alpha$ satisfies the Lefschetz type property on $2$-forms.

The nilmanifold $N$ is not formal by a theorem of K. Hasegawa's, \cite[Theorem 1, Corollary]{hasegawa-89}. The non-formality of $M$ can be also proved by giving a non-zero triple Massey product on $N$, see \cite{d-g-m-s}: since
$$ \left[E_\alpha^1\right]\smile \left[E_\alpha^3\right] \;=\; \left(\alpha-1\right)\,\left[\de E_\alpha^5\right]\;=\; 0 \;, \quad \left[E_\alpha^3\right]\smile \left[E_\alpha^2\right] \;=\; -\alpha\,\left(\alpha-1\right)\,\left[\de E_\alpha^4\right]\;=\; 0 \;,$$
we get that the triple Massey product
$$
\left\langle [E_\alpha^1],\, [E_\alpha^3],\, [E_\alpha^2] \right\rangle \;=\; -\left(\alpha-1\right) \, \left[ E_\alpha^{25}+\alpha\,E_\alpha^{14}\right]
$$
does not vanish, and hence $N$ is not formal.

In summary, we have proven the following result.
\begin{prop}
There is a non-formal $6$-dimensional nilmanifold $N$ endowed with a $1$-parameter family $\left\{\left(J_\alpha,\, \omega_\alpha,\, g_{\alpha}\right)\right\}_{\alpha>1}$ of left-invariant almost-K\"ahler structures being \Cpf\ and \pf\ and satisfying the Lefschetz type property on $2$-forms.
\end{prop}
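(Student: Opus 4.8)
The statement is a summary of the construction carried out above, so the plan is simply to assemble the pieces in a logically self-contained order. First I would fix the $6$-dimensional nilpotent Lie algebra $\mathfrak{n}$ with the stated structure equations and, since its structure constants are rational, invoke Mal'tsev's criterion \cite[Theorem 7]{malcev} to produce a discrete co-compact subgroup $\Gamma$ of the connected simply-connected nilpotent Lie group $G$ integrating $\mathfrak{n}$; set $N:=\left.\Gamma\right\backslash G$. Then, for each $\alpha>1$, I would verify directly that $\omega_\alpha=e^{14}+\alpha\,e^{25}+(\alpha-1)\,e^{36}$ is closed (a one-line check using $\de e^4=e^{23}$, $\de e^5=e^{13}$, $\de e^6=e^{12}$ and the fact that $e^1,e^2,e^3$ are closed) and non-degenerate ($\omega_\alpha^3$ is a nonzero multiple of $e^{123456}$), and that $J_\alpha$, defined on the frame dual to $\left\{e^j\right\}$, squares to $-\id$ and satisfies $\omega_\alpha(\sspace,\,J_\alpha\ssspace)=g_\alpha(\sspace,\,\ssspace)>0$. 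This exhibits $\left(J_\alpha,\,\omega_\alpha,\,g_\alpha\right)$ as a left-invariant almost-K\"ahler structure on $N$.

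Next I would compute $H^2_{dR}(N;\R)$. By Nomizu's theorem \cite[Theorem 1]{nomizu}, the de Rham cohomology of $N$ coincides with the cohomology of the Chevalley--Eilenberg complex $\left(\wedge^\bullet\mathfrak{n}^*,\,\de\right)$; passing to the $g_\alpha$-orthonormal co-frame $\left\{E_\alpha^j\right\}$ and its structure equations $\de E_\alpha^4=\frac{1}{\alpha(\alpha-1)}E_\alpha^{23}$, $\de E_\alpha^5=\frac{1}{\alpha-1}E_\alpha^{13}$, $\de E_\alpha^6=\frac{1}{\alpha}E_\alpha^{12}$, one reads off the eight-dimensional space $H^2$ together with the explicit $g_\alpha$-harmonic representatives listed above. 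The essential observation is that, when re-expressed in the $(1,0)$-co-frame $\varphi_\alpha^j=E_\alpha^j+\im E_\alpha^{j+3}$, each of these representatives is either of type $(1,1)$ or of type $(2,0)+(0,2)$ with respect to $J_\alpha$; hence $H^2_{dR}(N;\R)$ admits a basis of $g_\alpha$-harmonic representatives of pure type, and \cite[Theorem 3.7]{fino-tomassini} gives at once that $J_\alpha$ is \Cpf\ and \pf\ (with $h^+_{J_\alpha}(N)=5$ and $h^-_{J_\alpha}(N)=3$).

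The Lefschetz type property on $2$-forms is then immediate: since $H^2_{dR}(N;\R)$ has a basis of $g_\alpha$-harmonic $2$-forms of pure type with respect to $J_\alpha$, Theorem \ref{thm:cf-implies-lefschetz} applies verbatim. (As an independent check one can perform the explicit computation above, verifying for instance that $\mathcal{L}_{\omega_\alpha}$ carries each basis element $E_\alpha^{15},\dots,E_\alpha^{35}$ to $\pm\,*_{g_\alpha}$ of another such element, and that $\de\,*_{g_\alpha}\,\mathcal{L}_{\omega_\alpha}\left(E_\alpha^{14}+\frac{1}{\alpha}E_\alpha^{25}\right)=0$ and $\de\,*_{g_\alpha}\,\mathcal{L}_{\omega_\alpha}\left(e^{25}+e^{36}\right)=0$.) Finally, for non-formality I would invoke Hasegawa's theorem \cite[Theorem 1, Corollary]{hasegawa-89}: a compact nilmanifold is formal if and only if it is a torus, and $\mathfrak{n}$ is non-abelian. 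Alternatively, one exhibits a non-vanishing triple Massey product: from $\left[E_\alpha^1\right]\smile\left[E_\alpha^3\right]=0$ and $\left[E_\alpha^3\right]\smile\left[E_\alpha^2\right]=0$ one forms $\left\langle\left[E_\alpha^1\right],\,\left[E_\alpha^3\right],\,\left[E_\alpha^2\right]\right\rangle$, which equals $-(\alpha-1)\left[E_\alpha^{25}+\alpha\,E_\alpha^{14}\right]\neq0$ in $H^2_{dR}(N;\R)$, so $N$ is not formal \cite{d-g-m-s}. There is no genuine obstacle in this argument; the only mildly delicate points are the bookkeeping in the Chevalley--Eilenberg cohomology computation and the verification that the chosen harmonic basis consists of forms of pure type, both of which are routine linear algebra in the co-frame $\left\{E_\alpha^j\right\}$.
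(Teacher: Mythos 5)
Your proposal is correct and follows essentially the same route as the paper: Mal'tsev for the lattice, Nomizu for the cohomology, the pure-type $g_\alpha$-harmonic basis feeding into \cite[Theorem 3.7]{fino-tomassini} for \Cpf ness and \pf ness, and Hasegawa (or the Massey product) for non-formality. Your one small streamlining --- deducing the Lefschetz type property directly from Theorem \ref{thm:cf-implies-lefschetz} rather than by the explicit computation of $\mathcal{L}_{\omega_\alpha}$ on each harmonic representative --- is legitimate, since the hypothesis of that theorem is exactly the existence of the pure-type harmonic basis you have already established.
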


\begin{rem}
It has to be noted that $\omega_\alpha\wedge\sspace\colon \wedge^2N^6\to\wedge^4N^6$ induces an isomorphism in cohomology $\left[\omega_\alpha\right]\smile\sspace\colon H^2_{dR}(N,\R)\to H^4_{dR}(N,\R)$, while, accordingly to \cite[Theorem A]{benson-gordon1}, $\left[\omega_\alpha\right]^2\smile\sspace \colon H^1_{dR}(N,\R)\to H^5_{dR}(N,\R)$ is not an isomorphism.
\end{rem}

\section{Almost-K\"ahler \Cpf\ structures}
\subsection{The Nakamura manifold of completely solvable type}
Take $A\in\SL(2;\Z)$ with two different real eigenvalues $\esp^\lambda$ and $\esp^{-\lambda}$ with $\lambda>0$, and fix $P\in\GL(2;\R)$ such that $PAP^{-1}=\mathrm{diag}\left(\esp^{\lambda},\,\esp^{-\lambda}\right)$. For example, take
$$ A:=
\left(
\begin{array}{cc}
 2 & 1 \\
 1 & 1
\end{array}
\right)
\quad \text{ and } \quad
P:=
\left(
\begin{array}{cc}
 \frac{1-\sqrt{5}}{2} & 1 \\
 1 & \frac{\sqrt{5}-1}{2}
\end{array}
\right)
$$
and consequently $\lambda=\log\frac{3+\sqrt{5}}{2}$.
Let $M^6:=:M^6(\lambda)$ be the compact complex manifold
$$ M^6 := \mathbb{S}^1_{x^2} \times \frac{\R_{x^1}\times\T^2_{\C,\,\left(x^3,\,x^4,\,x^5,\,x^6\right)}}
{\left\langle T_1\right\rangle} $$
where $\T^2_\C$ is the $2$-dimensional complex torus $\T^2_\C := \frac{\C^2}{P\Z\left[\im\right]^2}$ and $T_1$ acts on $\R\times \T^2_\C$ as
$T_1\left(x^1,\,x^3,\,x^4,\,x^5,\,x^6\right) :=\left(x^1+\lambda,\, \esp^{-\lambda}x^3,\, \esp^{\lambda}x^4,\,
\esp^{-\lambda}x^5,\, \esp^{\lambda}x^6\right)$.
The manifold $M^6$ can be seen as a compact quotient of a completely-solvable Lie group by a discrete co-compact subgroup, \cite[Example 3.1]{fernandez-munoz-santisteban}; (denote the Lie algebra naturally associated to the completely-solvable Lie group of $M^6$ by $\mathfrak{g}$).
Using coordinates $x^2$ on $\mathbb{S}^1$, $x^1$ on $\R$ and $\left(x^3,x^4,x^5,x^6\right)$ on $\T^2_\C$, we set
$$ e^1 := \de x^1 , \;\; e^2 := \de x^2 , \;\; e^3 := \esp^{x^1}\de x^3 , \;\; e^4 := \esp^{-x^1}\de x^4 , \;\; e^5 := \esp^{x^1}\de x^5 , \;\; e^6 := \esp^{-x^1}\de x^6 $$
as a basis for $\mathfrak{g}^*$; therefore, with respect to $\{e^i\}_{i\in\{1,\ldots,6\}}$, the structure equations are the following:
$$ \de e^1 \;=\; \de e^2 \;=\; 0 \;, \quad \de e^3 \;=\; e^{13} \;, \quad \de e^4 \;=\; -e^{14} \;, \quad \de e^5 \;=\; e^{15} \;, \quad \de e^6 \;=\; -e^{16} \;. $$

\subsection{The de Rham cohomology of the Nakamura manifold}\label{subsec:derham-m6}

Let $J$ be the almost-complex structure on $M^6$ defined by the complex $(1,0)$-forms given by
$$ \varphi^1 \;:=\; \frac{1}{2}\left(e^1+\im e^2\right)\;, \qquad \varphi^2 \;:=\; e^3+\im e^5\;, \qquad \varphi^3 \;:=\; e^4+\im e^6 \;. $$
It is straightforward to check that $J$ is integrable.\\
Being $M^6$ a compact quotient of a completely-solvable Lie group, one computes the de Rham cohomology of $M^6$ easily by
A. Hattori's theorem \cite[Corollary 4.2]{hattori}, see Theorem \ref{thm:nomizu-hattori}:
\begin{eqnarray*}
H^1_{dR}\left(M^6;\C\right) &\simeq& \C\left\langle \varphi^1, \, \bar\varphi^1 \right\rangle \;, \quad
H^2_{dR}\left(M^6;\C\right) \;\simeq\; \C\left\langle \varphi^{1\bar1}, \, \varphi^{2\bar3}, \, \varphi^{3\bar2}, \,
\varphi^{23}, \, \varphi^{\bar2\bar3}  \right\rangle \;, \\[2pt]
H^3_{dR}\left(M^6;\C\right) &\simeq& \C\left\langle \varphi^{12\bar3}, \, \varphi^{13\bar2}, \, \varphi^{123}, \,
\varphi^{1\bar2\bar3}, \, \varphi^{2\bar1\bar3}, \, \varphi^{3\bar1\bar2}, \, \varphi^{23\bar1}, \, \varphi^{\bar1\bar2\bar3}
\right\rangle
\end{eqnarray*}
(for the sake of clearness, we write, for example, $\varphi^{A\bar B}$ in place of $\varphi^A\wedge\bar\varphi^B$ and we list the harmonic representatives with respect to the metric $g:=\sum_{j=1}^{3} \varphi^j\odot\bar\varphi^j$ instead of their classes). Therefore, $M^6$ is {\em geometrically formal}, i.e., the product of $g$-harmonic forms is still $g$-harmonic, and therefore it is {\em formal}, namely the de Rham complex of $M$ is formal as a differential graded algebra, see, e.g., \cite{d-g-m-s}. Furthermore, it can be easily checked that
$$
\omega \;:=\; e^{12}+e^{34}+e^{56}
$$
gives rise to a symplectic structure on $M^6$ satisfying the Hard Lefschetz Condition. We obtain the following result.
\begin{prop}[{\cite[Proposition 3.2]{fernandez-munoz-santisteban}}]
 The manifold $M^6$ is formal and it admits a symplectic form $\omega$ satisfying the Hard Lefschetz Condition.
\end{prop}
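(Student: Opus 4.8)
The plan is to establish the two claims --- formality of $M^6$ and the Hard Lefschetz Condition for $\omega$ --- by separate arguments, both of which reduce the question to a finite-dimensional computation inside the Chevalley--Eilenberg complex $\left(\wedge^\bullet\mathfrak{g}^*,\,\de\right)$. This reduction is available because $M^6$ is a compact quotient of a completely-solvable Lie group, so that by A.~Hattori's theorem (Theorem~\ref{thm:nomizu-hattori}) the inclusion of left-invariant forms induces an isomorphism of graded algebras $H^\bullet\left(\wedge^\bullet\mathfrak{g}^*,\,\de\right)\cong H^\bullet_{dR}(M^6;\R)$; moreover the left-invariant metric $g=\sum_{j=1}^{3}\varphi^j\odot\bar\varphi^j$ makes $\left\{e^1,\,\ldots,\,e^6\right\}$ an orthogonal coframe, so its Hodge star preserves $\wedge^\bullet\mathfrak{g}^*$ and the relevant Hodge theory lives inside that finite-dimensional complex.

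I would first dispatch formality via geometric formality. One checks that each form in the displayed bases of $H^1$, $H^2$, $H^3$ is $g$-harmonic: closedness is read off the structure equations (equivalently, from $\de\varphi^2=e^1\wedge\varphi^2$ and $\de\varphi^3=-e^1\wedge\varphi^3$, which give $\de\left(\varphi^2\wedge\bar\varphi^3\right)=0$ and the analogous identities), and co-closedness is the dual computation $\de\left(*_g\,\alpha\right)=0$, again immediate from the structure equations. The essential point is then that the span $\mathcal{H}^\bullet_g\subseteq\wedge^\bullet\mathfrak{g}^*$ of the harmonic forms is closed under $\wedge$: products of closed forms being closed, only co-closedness of the products needs verification, and this is a short finite check given the explicit lists --- the degree-$1$ harmonic forms are $e^1,e^2$, whose only non-trivial product $e^{12}$ is harmonic; wedging $e^1,e^2$ with the degree-$2$ harmonic forms reproduces, up to sign, the listed degree-$3$ harmonic forms; and the remaining products land in degrees $4,5,6$, where membership in $\mathcal{H}^\bullet_g$ (of dimensions $b_4=b_2=5$, $b_5=b_1=2$, $b_6=1$ by Poincaré duality) is confirmed by inspection. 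Hence $(M^6,g)$ is geometrically formal, and formality follows at once: the inclusion $\left(\mathcal{H}^\bullet_g,\,0\right)\hookrightarrow\left(\wedge^\bullet\mathfrak{g}^*,\,\de\right)$ is a quasi-isomorphism of commutative differential graded algebras whose source is isomorphic to $\left(H^\bullet_{dR}(M^6;\R),\,0\right)$, so the de Rham algebra of $M^6$ is linked to its cohomology algebra, equipped with the zero differential, by a zig-zag of quasi-isomorphisms.

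For the Hard Lefschetz Condition I would first record that $\omega=e^{12}+e^{34}+e^{56}$ is symplectic: $\de\omega=0$ is an immediate cancellation in the structure equations and $\omega^3=6\,e^{123456}\neq 0$; in fact $\omega$ is $g$-harmonic, being a linear combination of $\varphi^{1\bar1}$ and $\varphi^{2\bar3}+\varphi^{3\bar2}$. Since $\dim M^6=6$, it suffices to prove that $\left[\omega\right]^k\smile\sspace\colon H^{3-k}_{dR}(M^6;\R)\to H^{3+k}_{dR}(M^6;\R)$ is an isomorphism for $k=1,2,3$ (the cases $k=0$ and $k>3$ being trivial). For $k=3$ this is $\left[1\right]\mapsto 6\,\left[e^{123456}\right]$, an isomorphism $H^0\to H^6$. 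For $k=2$ both sides are $2$-dimensional, $\left[\omega\right]^2\smile\left[e^1\right]=2\,\left[e^{13456}\right]$ and $\left[\omega\right]^2\smile\left[e^2\right]=2\,\left[e^{23456}\right]$, and the $2\times 2$ matrix of Poincaré pairings of these two classes against $\left[e^1\right],\left[e^2\right]$ is non-degenerate, so the map is an isomorphism $H^1\to H^5$. For $k=1$ both sides are $5$-dimensional; representing $H^2_{dR}(M^6;\R)$ by the displayed basis $\left\{\varphi^{1\bar1},\,\varphi^{2\bar3},\,\varphi^{3\bar2},\,\varphi^{23},\,\varphi^{\bar2\bar3}\right\}$ and choosing an explicit basis of $H^4_{dR}(M^6;\R)$ (for instance one Poincaré-dual to this basis of $H^2$), one writes down the $5\times 5$ matrix of $\left[\omega\right]\smile\sspace$ and verifies it is invertible.

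The only genuine obstacle is bookkeeping, concentrated in the $k=1$ step of Hard Lefschetz --- checking that a $5\times 5$ matrix obtained by wedging the degree-$2$ generators with $\omega$ has non-zero determinant --- and in making sure the listed representatives are really $g$-co-closed. Conceptually nothing is hard: Hattori's theorem collapses the problem onto the finite-dimensional algebra $\left(\wedge^\bullet\mathfrak{g}^*,\,\de\right)$, and on that algebra the very simple shape of $\de$ --- each $\de e^j$ being $0$ or $\pm e^1\wedge e^j$ --- makes both geometric formality and the Lefschetz isomorphisms transparent.
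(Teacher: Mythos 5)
Your proposal is correct and follows essentially the same route as the paper: reduce to the left-invariant complex via Hattori's theorem, deduce formality from geometric formality of the listed $g$-harmonic representatives, and verify the Hard Lefschetz Condition for $\omega=e^{12}+e^{34}+e^{56}$ by direct computation (the paper simply asserts this last check). The only blemish is a harmless sign slip: in the basis used here one has $e^{34}+e^{56}=\frac{1}{2}\left(\varphi^{2\bar3}-\varphi^{3\bar2}\right)$, not a combination involving $\varphi^{2\bar3}+\varphi^{3\bar2}$, which does not affect the argument.
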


\noindent Note also that $\tilde\omega:=\frac{\im}{2}\left(\varphi^{1\bar1}+\varphi^{2\bar2}+\varphi^{3\bar3}\right)$
is not $\de$-closed but $\de\tilde\omega^{2}=0$, from which it follows that the manifold $M^6$ admits a balanced metric.

Moreover, since $M^6$ is a compact quotient of a completely-solvable Lie group, by the K. Hasegawa's theorem \cite[Main Theorem]{hasegawa-06},
we have the following result, see also \cite[Theorem 3.3]{fernandez-munoz-santisteban}. (We recall that a compact complex manifold is said to belong to \emph{class $\mathcal{C}$ of Fujiki} if it admits a proper modification from a K\"ahler manifold.)

\begin{thm}[{\cite[Main Theorem]{hasegawa-06}}]
 The manifold $M^6$ admits no K\"ahler structure and it is not in class $\mathcal{C}$ of Fujiki.
\end{thm}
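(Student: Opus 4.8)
The plan is to deduce this statement --- which is Hasegawa's Main Theorem --- from the description of $M^6$ as a solvmanifold, combined with the constraints that the K\"ahler, respectively Fujiki class $\mathcal{C}$, condition imposes on the fundamental group. The first step is to isolate the relevant feature of $M^6 = \Gamma\backslash G$: the completely-solvable Lie group $G$ is \emph{not} nilpotent. Indeed, the structure equations $\de e^1 = \de e^2 = 0$, $\de e^3 = e^{13}$, $\de e^4 = -e^{14}$, $\de e^5 = e^{15}$, $\de e^6 = -e^{16}$ show that $\mathrm{ad}_{e_1}$ acts on $[\mathfrak{g},\mathfrak{g}] = \langle e_3,e_4,e_5,e_6 \rangle$ with non-zero real eigenvalues $\pm 1$; equivalently, in the torus-bundle presentation $M^6 = \mathbb{S}^1_{x^2}\times(\R_{x^1}\times\T^2_{\C})/\langle T_1\rangle$ the monodromy is built from the Anosov matrix $A\in\SL(2;\Z)$, whose eigenvalues $\esp^{\pm\lambda}$ have modulus $\neq 1$. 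Consequently $\Gamma = \pi_1(M^6)$ is a polycyclic group that is \emph{not virtually nilpotent} (the monodromy is not quasi-unipotent; equivalently $\Gamma$ has exponential growth), and in particular not virtually abelian. Note that the usual obstructions are inconclusive here, since $M^6$ is formal and $b_1(M^6) = 2$ is even.

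For the non-existence of a K\"ahler metric I would argue on $\pi_1$. If $M^6$ admitted a K\"ahler structure, then $\Gamma$ would be a K\"ahler group. By the theorem of Arapura--Nori a polycyclic K\"ahler group is virtually nilpotent, and a finitely generated nilpotent K\"ahler group is virtually abelian (Benson--Gordon, Campana); together these force $\Gamma$ to be virtually abelian, contradicting the first step. (Equivalently, one cites Hasegawa's classification: a compact K\"ahler solvmanifold must be a finite quotient of a complex torus that is holomorphically fibred in tori over a torus, so $\Gamma$ would be virtually abelian and $M^6$ finitely covered by a torus.) Hence $M^6$ carries no K\"ahler structure.

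For membership in class $\mathcal{C}$, suppose $M^6$ were bimeromorphic to a compact K\"ahler manifold $\tilde X$. A proper modification of compact complex manifolds induces an isomorphism on fundamental groups (its exceptional locus maps to an analytic subset of codimension $\geq 2$), so $\pi_1(\tilde X) \cong \Gamma$; thus $\Gamma$ would again be a K\"ahler group, and the argument of the previous paragraph gives a contradiction. Therefore $M^6$ is not in class $\mathcal{C}$ of Fujiki.

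The delicate input is entirely the theory of K\"ahler groups --- the fact that a polycyclic K\"ahler group is virtually abelian --- which is the non-trivial core of Hasegawa's theorem; granting it, checking that $\Gamma$ is not virtually nilpotent is elementary (the torus monodromy has an eigenvalue of modulus $> 1$), and the class $\mathcal{C}$ statement then follows formally from the bimeromorphic invariance of $\pi_1$.
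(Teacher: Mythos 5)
Your argument is correct in substance, but it is worth pointing out that the paper does not prove this statement at all: the theorem is stated as a direct citation of Hasegawa's Main Theorem in \cite{hasegawa-06}, which classifies compact solvmanifolds admitting K\"ahler structures (and those in class $\mathcal{C}$) as finite quotients of complex tori fibred in tori over tori --- for completely-solvable type, as complex tori --- and the authors simply observe that $M^6$ is a completely-solvable solvmanifold which is not of that form. What you have written is essentially an unpacking of Hasegawa's own proof, specialized to $M^6$: the identification of $\pi_1(M^6)=\Gamma$ as a polycyclic, non--virtually-nilpotent group (correct: the monodromy $A$ is hyperbolic, so $\Gamma$ has exponential growth), the Arapura--Nori theorem that polycyclic K\"ahler groups are virtually nilpotent, and the bimeromorphic invariance of $\pi_1$ to handle class $\mathcal{C}$. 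This buys a self-contained argument for the specific manifold at the cost of importing the full strength of the K\"ahler-group machinery. One remark: your intermediate step ``a finitely generated nilpotent K\"ahler group is virtually abelian (Benson--Gordon, Campana)'' is both unnecessary and shakily attributed. It is unnecessary because your first step already shows $\Gamma$ is not virtually \emph{nilpotent}, so Arapura--Nori alone yields the contradiction; and the Benson--Gordon theorem is a statement about K\"ahler structures on nilmanifolds (via the Hard Lefschetz property or formality of the manifold), not about abstract nilpotent K\"ahler groups --- in Hasegawa's actual proof the passage from ``$\Gamma$ virtually nilpotent'' to ``torus'' goes through Mostow rigidity for solvmanifolds followed by the Benson--Gordon/Hasegawa theorem on K\"ahler nilmanifolds, not through a purely group-theoretic claim. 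Since that step is redundant here, the proof stands.
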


\subsection{An almost-K\"ahler structure on the Nakamura manifold}
By K. Hasegawa's theorem \cite[Main Theorem]{hasegawa-06}, any integrable complex structure on $M^6$ (for example, the $J$ defined in
\S\ref{subsec:derham-m6}) does not admit any symplectic structure compatible with it. Therefore, we consider the almost-complex structure $J'$ defined by
$$ J'e^1\;:=\;-e^2\;,\qquad J' e^3\;:=\;-e^4\;,\qquad J' e^5\;:=\;-e^6\;;$$
considering
$$
\psi^1 \;:=\; \frac{1}{2}\left(e^1+\im e^2\right) \;,\quad
\psi^2 \;:=\; e^3+\im e^4 \;,\quad
\psi^3 \;:=\; e^5+\im e^6
$$
as a co-frame for the space of $(1,0)$-forms on $\left(M^6,\,J'\right)$, one can compute
$$
 \de\psi^1 = 0\,,\quad
 \de\psi^2 = \psi^{1\bar2}+\psi^{\bar1\bar2}\,,\quad
 \de\psi^3 = \psi^{1\bar3}+\psi^{\bar1\bar3}\;,
$$
from which it is clear that $J'$ is not integrable. Note that the $J'$-compatible $2$-form
$$ \omega':=e^{12}+e^{34}+e^{56} $$
is $\de$-closed. Hence, $\left(M^6,\,J',\,\omega'\right)$ is an almost-K\"ahler manifold.\\
Moreover, recall that
$$ H^2_{dR}\left(M^6;\R\right) \;\simeq\; \underbrace{\R\left\langle \im\psi^{1\bar1},\, \im\psi^{2\bar2},\, \im\psi^{3\bar3},\, \im\left(\psi^{2\bar3}+\psi^{3\bar2}\right) \right\rangle}_{\subseteq H^+_{J'}\left(M^6\right)_\R} \oplus \underbrace{\R\left\langle \im\left(\psi^{23}-\psi^{\bar2\bar3}\right)\right\rangle}_{\subseteq H^-_{J'}\left(M^6\right)_\R} \;, $$
where we have listed the harmonic representatives with respect to the metric $g':=\sum_{j=1}^6 e^{j}\odot e^{j}$
instead of their classes; note that the listed $g'$-harmonic representatives are of pure type with respect to $J'$. Therefore, $J'$ is obviously \Cf; it is also \Cp\ by \cite[Proposition 3.2]{fino-tomassini} or \cite[Proposition 2.8]{draghici-li-zhang}, see Proposition \ref{prop:almK-Cp}. Moreover, since any cohomology class in $H^+_{J'}\left(M^6\right)_\R$
(respectively, in $H^-_{J'}\left(M^6\right)_\R$) has a $g'$-harmonic representative in $\mathcal{Z}^{(1,1)}_{J'}$
(respectively, in $\mathcal{Z}^{(2,0),(0,2)}_{J'}$), by \cite[Theorem 3.7]{fino-tomassini} we have that $J'$ is also \pf. One can explicitly check that the Lefschetz type operator $\mathcal{L}_{\omega'}\colon\wedge^2M^6\to\wedge^4M^6$ introduced in \S\ref{sec:cohomological-symplectic-properties} takes $g'$-harmonic $2$-forms to $g'$-harmonic $4$-forms, since
$$
\begin{array}{lr}
 \mathcal{L}_{\omega'}\,e^{12} = e^{1234}+e^{1256} = *_{g'}\,\left(e^{34}+e^{56}\right) \;, &\quad \mathcal{L}_{\omega'}\,e^{36} = e^{1236} = *_{g'}\,e^{45} \;,\\[2pt]
 \mathcal{L}_{\omega'}\,e^{34} = e^{1234}+e^{3456} = *_{g'}\,\left(e^{12}+e^{56}\right) \;, &\quad \mathcal{L}_{\omega'}\,e^{45} = e^{1245} = *_{g'}\,e^{36} \;, \\[2pt]
 \mathcal{L}_{\omega'}\,e^{56} = e^{1256}+e^{3456} = *_{g'}\,\left(e^{12}+e^{34}\right) \;. & \\[2pt]
\end{array}
$$

Resuming, we have shown the following result.
\begin{prop}
Let $M^6$ be the Nakamura manifold. Then there exist a complex structure $J$ and an almost-K\"ahler structure $\left(J',\,\omega',\,g'\right)$, both of which are \Cpf\ and \pf.\\
Furthermore, the Lefschetz type operator of the almost-K\"ahler structure $\left(J',\,\omega',\,g'\right)$ takes $g'$-harmonic $2$-forms to $g'$-harmonic $4$-forms.
\end{prop}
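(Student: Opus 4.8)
The plan is to verify the three assertions of the Proposition by combining results already established in the excerpt with explicit linear-algebra computations on the left-invariant forms on $M^6$. Since the statement collects three facts---(i) the complex structure $J$ is \Cpf\ and \pf, (ii) the almost-K\"ahler structure $(J',\,\omega',\,g')$ is \Cpf\ and \pf, and (iii) $\mathcal{L}_{\omega'}$ sends $g'$-harmonic $2$-forms to $g'$-harmonic $4$-forms---I would organize the proof as three independent blocks, each of which is essentially a matter of exhibiting harmonic representatives of pure type and invoking \cite[Theorem 3.7]{fino-tomassini}.

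First, for the integrable structure $J$: I would recall from \S\ref{subsec:derham-m6} the explicit basis of $H^2_{dR}(M^6;\C)$ computed via Hattori's theorem, rewrite those harmonic representatives over $\R$, and observe that each one is of pure type with respect to $J$ (the classes $\varphi^{1\bar1}$, $\Re\mathfrak{e}\,\varphi^{2\bar3}$, $\Im\mathfrak{m}\,\varphi^{2\bar3}$ are of type $(1,1)$, while $\Re\mathfrak{e}\,\varphi^{23}$, $\Im\mathfrak{m}\,\varphi^{23}$ are of type $(2,0)+(0,2)$). Since they form a basis of the real second de Rham cohomology and are $g$-harmonic and of pure type, \cite[Theorem 3.7]{fino-tomassini} gives that $J$ is \Cpf\ and \pf\ at once; the fact that these forms are closed follows from the structure equations, and that they span follows from Hattori's computation, so no genuine new calculation is needed here.

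Second, for the almost-K\"ahler structure $(J',\,\omega',\,g')$: I would repeat exactly the argument sketched just above the Proposition in the excerpt. Namely, I would display the $g'$-harmonic representatives $\im\psi^{1\bar1}$, $\im\psi^{2\bar2}$, $\im\psi^{3\bar3}$, $\im(\psi^{2\bar3}+\psi^{3\bar2})$ of type $(1,1)$ and $\im(\psi^{23}-\psi^{\bar2\bar3})$ of type $(2,0)+(0,2)$, check using the structure equations $\de\psi^1=0$, $\de\psi^2=\psi^{1\bar2}+\psi^{\bar1\bar2}$, $\de\psi^3=\psi^{1\bar3}+\psi^{\bar1\bar3}$ that they are closed and---since $g'$ is the standard flat-looking metric $\sum e^j\odot e^j$ and these are monomials in an orthonormal coframe---that they are co-closed, hence harmonic; note they are of pure type with respect to $J'$ and form a basis of $H^2_{dR}(M^6;\R)$. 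Then \cite[Theorem 3.7]{fino-tomassini} yields \Cpf ness and \pf ness; \Cp ness also follows independently from Proposition \ref{prop:almK-Cp} since $\omega'$ is a compatible symplectic form.

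Third, for the Lefschetz-type operator: I would simply present the table of identities $\mathcal{L}_{\omega'}\,e^{12}=e^{1234}+e^{1256}=*_{g'}(e^{34}+e^{56})$, $\mathcal{L}_{\omega'}\,e^{34}=*_{g'}(e^{12}+e^{56})$, $\mathcal{L}_{\omega'}\,e^{56}=*_{g'}(e^{12}+e^{34})$, $\mathcal{L}_{\omega'}\,e^{36}=*_{g'}\,e^{45}$, $\mathcal{L}_{\omega'}\,e^{45}=*_{g'}\,e^{36}$, and observe that the right-hand sides are $\de$-closed and $\delta_{g'}$-closed (the former by the structure equations, the latter because $*_{g'}$ of a closed monomial is again a closed monomial here), hence $g'$-harmonic; since the listed harmonic $2$-forms are $\R$-linear combinations of these monomials, $\mathcal{L}_{\omega'}$ carries the space of $g'$-harmonic $2$-forms into the space of $g'$-harmonic $4$-forms. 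The main---and really the only---obstacle is bookkeeping: one must make sure the five listed classes genuinely exhaust $H^2_{dR}(M^6;\R)$ and that co-closedness of the relevant monomials is checked with the correct signs coming from the structure equations $\de e^3=e^{13}$, $\de e^4=-e^{14}$, $\de e^5=e^{15}$, $\de e^6=-e^{16}$; alternatively, since $(M^6,\,\omega',\,g')$ is almost-K\"ahler and admits a basis of pure-type harmonic $2$-forms, the Lefschetz-type property also follows abstractly from Theorem \ref{thm:cf-implies-lefschetz}, which I would mention as a conceptual shortcut after the explicit verification.
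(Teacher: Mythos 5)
Your proposal is correct and follows essentially the same route as the paper: exhibit the pure-type $g$- (resp.\ $g'$-) harmonic bases of $H^2_{dR}(M^6;\R)$ coming from Hattori's theorem, invoke \cite[Theorem 3.7]{fino-tomassini} (and Proposition \ref{prop:almK-Cp}) for the \Cpf\ and \pf\ properties, and verify the Lefschetz type property by the explicit table of $\mathcal{L}_{\omega'}$ on the harmonic monomials. Your closing remark that part (iii) also follows abstractly from Theorem \ref{thm:cf-implies-lefschetz} is a valid (if redundant) observation consistent with the paper.
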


Inspired by the argument of the proof of \cite[Theorem 2.3]{draghici-li-zhang}, see Theorem \ref{thm:dlz}, one can ask the following question, compare also \cite[\S2]{draghici-li-zhang-2}; we provide in Proposition \ref{prop:iwasawa-non-full} an example of a non-\Cf\ almost-K\"ahler structure for which the Lefschetz type property on $2$-forms does not hold.
\begin{question}
Let $\left(M,\, J,\, \omega,\, g\right)$ be a compact $2n$-dimensional almost-K\"ahler manifold satisfying the Lefschetz type property on $2$-forms. Is $J$ \Cf?
\end{question}

\section{An almost-K\"ahler non-\Cf\ structure}
Let $X:=\left. \Z\left[\im\right]^3 \right\backslash \left(\C^3,\,*\right)$ be the {\em Iwasawa manifold}, where the group structure on $\C^3$ is defined by
$$ \left(z_1,\,z_2,\,z_3\right) * \left(w_1,\,w_2,\,w_3\right) := \left(z_1+w_1 ,\, z_2+w_2 ,\, z_3+z_1w_2+w_3\right) \;.$$
Considering the standard complex structure induced by the one on $\C^3$ and setting $\left\{\varphi^1,\,\varphi^2,\,\varphi^3\right\}$ as a global co-frame for the $(1,0)$-forms on $X$, by A. Hattori's theorem \cite[Corollary 4.2]{hattori}, see Theorem \ref{thm:nomizu-hattori}, one gets that
\begin{eqnarray*}
 H^2_{dR}(X;\C) &\simeq& \R\left\langle \varphi^{13}+\varphi^{\bar{1}\bar{3}},\; \im\left(\varphi^{13}-\varphi^{\bar{1}\bar{3}}\right),\;\varphi^{23}+\varphi^{\bar{2}\bar{3}},\; \im\left(\varphi^{23}-\varphi^{\bar{2}\bar{3}}\right)\right.,\\[2pt]
  && \left.\varphi^{1\bar{2}}-\varphi^{2\bar{1}},\; \im\left(\varphi^{1\bar{2}}+\varphi^{2\bar{1}}\right),\;\im\varphi^{1\bar{1}},\;\im\varphi^{2\bar{2}} \right\rangle \,\otimes_\R\, \C \;,
  \end{eqnarray*}
where we have listed the harmonic representatives with
respect to the metric $g:=\sum_{h=1}^3\varphi^h\odot\bar\varphi^h$ instead of their classes.
Set
$$
\varphi^{1}=:e^1+\im e^2\;,\qquad \varphi^{2}=:e^3+\im e^4\;,\qquad \varphi^{3}=:e^5+\im e^6\;;
$$
then,
$$
\de e^5 \;=\; -e^{13}+e^{24} \;, \qquad \de e^6 \;=\; -e^{14}-e^{23} \;,
$$
the other differentials being zero. Therefore,
$$ H^2_{dR}(X;\R) \;\simeq\; \R\left\langle e^{15}-e^{26},\, e^{16}+e^{25},\, e^{35}-e^{46},\, e^{36}+e^{45},\, e^{13}+e^{24},\, e^{23}-e^{14},\, e^{12},\, e^{34} \right\rangle \;.$$
Set
\begin{eqnarray*}
&{}& v_1:=e^{15}-e^{26}\;,\qquad v_2:=e^{16}+e^{25}\;,\qquad v_3:= e^{35}-e^{46}\;,\qquad v_4:=e^{36}+e^{45} \;,\\[2pt]
 &{}& v_5:=e^{13}+e^{24}\;,\qquad v_6:=e^{23}-e^{14}\;,\qquad v_7:=e^{12}\;,\qquad v_8:=e^{34}\;.\end{eqnarray*}

\smallskip

Consider the almost-K\"ahler structure $\left(J,\, \omega,\, g\right)$ on $X$ defined by
$$ Je^1 \;:=\; -e^6\;,\qquad  Je^2 \;:=\; -e^5\;,\qquad Je^3 \;:=\; -e^4 \;, \qquad \omega \;:=\; e^{16}+e^{25}+e^{34}\;. $$

We easily get that
$$
 \R\left\langle v_2,\,v_3+v_5,\,v_4-v_6,\,v_8\right\rangle \subseteq H^+_J(X) \;, \qquad  \R\left\langle v_1,\,v_3-v_5,\,v_4+v_6\right\rangle \subseteq H^-_J(X)\;.
$$
We claim that the previous inclusions are actually equalities, and in particular that \emph{$J$ is a non-\Cf\ almost-K\"ahler structure} on $X$.\\
Indeed, we firstly note that, by \cite[Proposition 3.2]{fino-tomassini} or \cite[Proposition 2.8]{draghici-li-zhang}, see Proposition \ref{prop:almK-Cp}, $J$ is \Cp, since it admits a symplectic structure compatible with it. Moreover, we recall that a \Cf\ almost-complex structure is also \p\ by \cite[Proposition 2.30]{li-zhang} and therefore it satisfies also that
\begin{equation}\label{eq:pure-4}
H^{(3,1),(1,3)}_J(X)_\R \cap H^{(2,2)}_J(X)_\R \;=\; \left\{0\right\} \;, 
\end{equation}
see \cite[Theorem 2.4]{angella-tomassini}. Therefore, our claim reduces to prove that $J$ does not satisfy \eqref{eq:pure-4}. Note that
\begin{eqnarray*}
 \left[e^{3456}\right] &=& \left[ e^{3456}-\de e^{135}\right]=\left[e^{3456}+e^{1234}\right] \\[2pt]
                       &=& \left[ e^{3456}+\de e^{135}\right]=\left[e^{3456}-e^{1234}\right]
\end{eqnarray*}
and that $e^{3456}+e^{1234}\in\wedge^{(3,1),(1,3)}_J(X)_\R$ while $e^{3456}-e^{1234}\in\wedge^{(2,2)}_J(X)_\R$, and so
$H^{(3,1),(1,3)}_J(X)_\R\cap H^{(2,2)}_J(X)_\R\ni \left[e^{3456}\right]$, therefore \eqref{eq:pure-4} does not hold, and hence $J$ is not \Cf.

Let $\mathcal{L}_\omega$ be the Lefschetz type operator of the almost-K\"ahler structure $\left(J,\, \omega,\, g\right)$. Then, we have
$\mathcal{L}_\omega\left(e^{12}\right)=e^{1234}=\de\left(e^{245}\right)$, i.e., $\mathcal{L}_\omega$ does not take $g$-harmonic $2$-forms in $g$-harmonic $4$-forms.

Hence, we have proved the following result.
\begin{prop}\label{prop:iwasawa-non-full}
Let $X:=\left. \Z\left[\im\right]^3 \right\backslash \left(\C^3,\,*\right)$ be the real manifold underlying the Iwasawa manifold. Then there exists an almost-K\"ahler structure $\left(J,\,\omega,\,g\right)$ on $X$ which is \Cp\ and non-\Cf.\\
Furthermore, the Lefschetz type operator of the almost-K\"ahler structure $\left(J,\,\omega,\, g\right)$ does not take $g$-harmonic $2$-forms to $g$-harmonic $4$-forms.
\end{prop}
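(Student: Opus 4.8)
The plan is to carry out the sequence of (mostly explicit) verifications indicated above, in four steps. First I would fix the presentation of $X$ as the nilmanifold with invariant coframe $\{e^1,\dots,e^6\}$ obtained from the standard holomorphic coframe $\{\varphi^1,\varphi^2,\varphi^3\}$ by $\varphi^j = e^{2j-1}+\im e^{2j}$, so that the only nontrivial structure equations are $\de e^5 = -e^{13}+e^{24}$ and $\de e^6 = -e^{14}-e^{23}$; by Theorem \ref{thm:nomizu-hattori}, $H^2_{dR}(X;\R)$ is then $8$-dimensional with the explicit invariant $g$-harmonic basis $v_1,\dots,v_8$ recorded above. Next I would introduce the almost-complex structure $J$ by $Je^1 := -e^6$, $Je^2 := -e^5$, $Je^3 := -e^4$ together with $\omega := e^{16}+e^{25}+e^{34}$, and check the routine facts that $\de\omega = 0$, $\omega^3\neq 0$, and $g := \omega(\sspace,\,J\sspace)$ is a positive-definite $J$-Hermitian metric, so that $(J,\omega,g)$ is a left-invariant almost-K\"ahler structure on $X$; since $J$ admits a compatible symplectic form, Proposition \ref{prop:almK-Cp} shows at once that $J$ is \Cp.

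The core of the argument is to see that $J$ fails to be \Cf, and the idea is to detect this in degree $4$. By \cite[Proposition 2.30]{li-zhang} a \Cf\ almost-complex structure is \p, hence by \cite[Theorem 2.4]{angella-tomassini} it satisfies \eqref{eq:pure-4}, that is $H^{(3,1),(1,3)}_J(X)_\R\cap H^{(2,2)}_J(X)_\R = \{0\}$; so it is enough to exhibit a nonzero class in this intersection. I would take $[e^{3456}]$: from $\de e^{135} = -e^{1234}$ the forms $e^{3456}+e^{1234}$ and $e^{3456}-e^{1234}$ are cohomologous representatives of $[e^{3456}]$, and, expressing each $e^j$ through $\varphi^k$ and $\bar\varphi^k$ and sorting by bidegree, one checks that $e^{3456}+e^{1234}\in\wedge^{(3,1),(1,3)}_J(X)_\R$ while $e^{3456}-e^{1234}\in\wedge^{(2,2)}_J(X)_\R$ (the $(3,1)$- and $(1,3)$-components of $e^{1234}$ and of $e^{3456}$ agree, and their $(2,2)$-components are opposite, so in the appropriate combination the corrections cancel). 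Finally $[e^{3456}]\neq 0$, since $\int_X e^{3456}\wedge e^{12} = \int_X e^{123456}\neq 0$ while $[e^{12}]\neq 0$, so nondegeneracy of the cup product on $H^\bullet_{dR}(X;\R)$ applies. Hence \eqref{eq:pure-4} fails, and therefore $J$ is not \Cf. Combining this with the evident inclusions $\R\langle v_2,\,v_3+v_5,\,v_4-v_6,\,v_8\rangle\subseteq H^+_J(X)$ and $\R\langle v_1,\,v_3-v_5,\,v_4+v_6\rangle\subseteq H^-_J(X)$ and recalling that $J$ is \Cp, the count $4+3 = 7 < 8 = \dim_\R H^2_{dR}(X;\R)$ forces these two inclusions to be equalities.

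For the final assertion, note that here $n = 3$, so $\mathcal{L}_\omega = \omega\wedge\sspace$, and $\mathcal{L}_\omega(e^{12}) = (e^{16}+e^{25}+e^{34})\wedge e^{12} = e^{1234} = \de(e^{245})$; since $e^{12}$ is one of the listed $g$-harmonic $2$-forms and a nonzero $\de$-exact form on a compact manifold is never $g$-harmonic, $\mathcal{L}_\omega$ does not take $g$-harmonic $2$-forms to $g$-harmonic $4$-forms.

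The one step I expect to be nonroutine is the type decomposition: verifying that the corrections by $\pm e^{1234}$ really do turn the two representatives of $[e^{3456}]$ into forms of pure type $(3,1)+(1,3)$, respectively $(2,2)$. This is where the specific $J$ enters, and it reduces to carefully tracking the $(p,q)$-components of $e^{1234}$ and of $e^{3456}$ in the $\varphi$-coframe; by contrast the structure equations, the identities $\de e^{135} = -e^{1234}$ and $\de e^{245} = e^{1234}$, the cup-product pairing, and the dimension count are all immediate.
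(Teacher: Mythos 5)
Your proposal is correct and follows essentially the same route as the paper: the same almost-K\"ahler structure, the same reduction of non-\Cf ness to the failure of $H^{(3,1),(1,3)}_J(X)_\R\cap H^{(2,2)}_J(X)_\R=\{0\}$ witnessed by $[e^{3456}]$ via the representatives $e^{3456}\pm e^{1234}$, and the same computation $\mathcal{L}_\omega(e^{12})=e^{1234}=\de(e^{245})$. The only (welcome) additions are your explicit check that $[e^{3456}]\neq 0$ via the cup product with $[e^{12}]$, which the paper leaves implicit, and the dimension count showing the stated inclusions are equalities.
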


\section{Almost-complex manifolds with large anti-invariant cohomology}
Given an almost-complex structure $J$ on a compact manifold $M$, it is natural to ask how large the cohomology subgroup $H^{(2,0),(0,2)}_J(M)_\R$ can be. In this direction, T. Dr\v{a}ghici, T.-J. Li, and the third author raised the following question in \cite{draghici-li-zhang-1}.
\begin{question}[{\cite[Conjecture 2.5]{draghici-li-zhang-1}}]
Are there compact $4$-dimensional manifold $M$ endowed with non-integrable almost-complex structures $J$ such that $\dim_\R H^-_J(M) \geq 3$?
\end{question}

\smallskip

We present here a $1$-parameter family $\left\{J_t\right\}_t$ of (non-integrable) almost-complex structures on the $6$-dimensional torus $\T^6$ having $h^-_{J_t}:=\dim_\R H^{-}_{J_t}\left(\T^6\right)_\R$ greater than $3$, see also \cite[\S4]{angella-tomassini}. For $t$ small enough, set $\alpha_t:=:\alpha_t\left(x^3\right)\in\mathcal{C}^\infty\left(\T^6\right)$ such that $\alpha_0(x^3)\equiv 1$ and set
$$
\varphi^1_t \;:=\; \de x^1\,+\,\im\,\alpha_t\,\de x^4\,,\quad
\varphi^2_t \;:=\; \de x^2\,+\,\im\,\de x^5\,,\quad
\varphi^3_t \;:=\; \de x^3\,+\,\im\,\de x^6\;;
$$
therefore, the structure equations are
$$
\de\varphi^1_t \;=\; \im\,\de\alpha_t\,\wedge\,\de x^4\,,\quad
\de\varphi^2_t \;=\; 0\,,\quad
\de\varphi^3_t \;=\; 0 \;.
$$
Straightforward computations give that the $J$-anti-invariant $\de$-closed $2$-forms are of the type
$$ \psi \;=\; \frac{C}{\alpha_t}\,\left(\de x^{13}-\alpha_t\,\de x^{46}\right) \,+\, D\,\left(\de x^{16} - \alpha_t\, \de x^{34}\right) +\, E\, \left(\de x^{23} - \de x^{56}\right) \,+\, F\,\left(\de x^{26} - \de x^{35}\right) \;, $$
where $C,\;D,\;E,\;F\in\R$ (we shorten $\de x^j\wedge \de x^k$ by $\de x^{jk}$). Moreover, the forms $\de x^{23} - \de x^{56}$ and $\de x^{26} - \de x^{35}$ are clearly harmonic with respect to the standard flat metric $\sum_{j=1}^{6}\de x^j\otimes\de x^j$, while the classes of $\de x^{16} - \alpha_t\, \de x^{34}$ and $\de x^{13}-\alpha_t\,\de x^{46}$ are non-zero, their harmonic parts being non-zero. Hence, we get that $h^-_{J_0}=6$ and
$$ h^-_{J_t} \;=\; 4 \qquad \text{ for small }\qquad t\neq0 \;. $$

\smallskip

In the general case, we ask the following natural question.
\begin{question}\label{question:large}
 Are there examples of non-integrable almost-complex structures $J$ on a compact $2n$-dimensional manifold with $\dim_\R H^-_J(M) > n\, (n-1)$?
\end{question}

\smallskip

Consider now a solvmanifold $M=\left. \Gamma \right\backslash G$, namely, a compact quotient of a connected simply-connected solvable Lie group $G$ by a co-compact discrete subgroup $\Gamma$. Denote the Lie algebra naturally associated to $G$ by $\mathfrak{g}$, and consider $\left(\wedge^\bullet\mathfrak{g}^*,\, \de\right)$ the subcomplex of the de Rham complex $\left(\wedge^\bullet M,\, \de\right)$ given by the left-invariant differential forms. The following result by K. Nomizu \cite{nomizu} and A. Hattori \cite{hattori} holds.

\begin{thm}[{\cite[Theorem 1]{nomizu}, \cite[Theorem 4.2]{hattori}}]\label{thm:nomizu-hattori}
Let $M$ be a nilmanifold or, more in general, a completely-solvable solvmanifold. Then $H^\bullet\left(\wedge^\bullet\mathfrak{g}^*,\, \de\right) \simeq H^\bullet_{dR}(M;\R)$.
\end{thm}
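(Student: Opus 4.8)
The plan is to show that the inclusion $i\colon\left(\wedge^\bullet\mathfrak{g}^*,\,\de\right)\hookrightarrow\left(\wedge^\bullet M,\,\de\right)$ of the subcomplex of left-invariant forms into the full de Rham complex induces an isomorphism in cohomology. Since the structure constants of $\mathfrak{g}$ are exactly the coefficients of $\de$ on $\wedge^\bullet\mathfrak{g}^*$, this inclusion is a morphism of differential graded algebras and hence induces a well-defined map $i^*\colon H^\bullet\left(\wedge^\bullet\mathfrak{g}^*,\,\de\right)\to H^\bullet_{dR}(M;\R)$; the content of the statement is that $i^*$ is bijective.

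First I would establish injectivity of $i^*$ by an averaging argument, which works for \emph{every} solvmanifold and needs no hypothesis beyond the existence of $\Gamma$. Writing $\pi\colon G\to M=\left.\Gamma\right\backslash G$ for the projection, any $\alpha\in\wedge^\bullet M$ lifts to a $\Gamma$-left-invariant form $\tilde\alpha:=\pi^*\alpha$ on $G$. Because the existence of the lattice $\Gamma$ forces $G$ to be unimodular, there is a bi-invariant Haar measure $\mu$ descending to a finite measure on $M$, normalized so that $\mu\left(\left.\Gamma\right\backslash G\right)=1$. Setting
$$\mathcal{A}(\alpha)\;:=\;\int_{\left.\Gamma\right\backslash G}\left(L_g^*\tilde\alpha\right)_e\,\de\mu(g)\;\in\;\wedge^\bullet\mathfrak{g}^*\,,$$
one checks that the integrand is genuinely a function on $\left.\Gamma\right\backslash G$ (using $L_\gamma^*\tilde\alpha=\tilde\alpha$ for $\gamma\in\Gamma$), that $\mathcal{A}$ commutes with $\de$ (each $L_g^*$ does, and $\de$ passes through the integral), and that $\mathcal{A}\circ i=\id$ on $\wedge^\bullet\mathfrak{g}^*$ (a left-invariant form is fixed by every $L_g^*$, and $\mu$ is a probability measure). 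Thus $\mathcal{A}$ is a chain-level retraction of $i$, whence $i^*$ is injective.

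The harder half is surjectivity, and this is exactly where nilpotency (Nomizu) or complete solvability (Hattori) enters; I would argue by induction on $\dim M$ using a fibration. For a nilmanifold one realizes $M$ as an iterated principal circle bundle, so it suffices to treat one step $S^1\hookrightarrow M\to M'$ over a lower-dimensional nilmanifold $M'$; in the completely-solvable case one instead uses the Mostow fibration of $M$ over a torus with nilmanifold fiber $\left.\Gamma_{\mathfrak{n}}\right\backslash N$ coming from the nilradical $N\trianglelefteq G$. In either case I would compare the Leray--Serre spectral sequence of the fibration, whose $E_2$-page is $H^p\left(\text{base};\,\mathcal{H}^q\right)$ with $\mathcal{H}^q$ the local system of fiber cohomologies, against the Hochschild--Serre spectral sequence of the Lie-algebra extension $0\to\mathfrak{n}\to\mathfrak{g}\to\mathfrak{g}/\mathfrak{n}\to0$, whose $E_2$-page is $H^p\left(\mathfrak{g}/\mathfrak{n};\,H^q(\mathfrak{n})\right)$. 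The inductive hypothesis applied to the fiber identifies $H^q(\mathfrak{n})$ with $H^q$ of the fiber, and $i$ induces a morphism of spectral sequences compatible with this identification.

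The main obstacle is the comparison of the two $E_2$-pages, i.e.\ showing that the cohomology of the base with coefficients in the monodromy local system $\mathcal{H}^q$ agrees with the Lie-algebra cohomology $H^p\left(\mathfrak{g}/\mathfrak{n};\,H^q(\mathfrak{n})\right)$ of the corresponding $\mathfrak{g}/\mathfrak{n}$-module. This is precisely the step that can fail for a general solvmanifold: the holonomy of $\mathcal{H}^q$ is governed by the eigenvalues of the adjoint action of $\mathfrak{g}/\mathfrak{n}$, and when these are genuinely complex (rotational) the local-system cohomology and the invariant cohomology need not coincide. Under complete solvability all such eigenvalues are real, so the relevant representations are, up to real scaling, unipotent, and a Nomizu-type vanishing argument for the off-diagonal contributions reduces everything to the invariants and forces the two $E_2$-pages to match; by the comparison theorem for spectral sequences $i^*$ is then an isomorphism. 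The base of the induction is a torus $T^k=\left.\Z^k\right\backslash\R^k$, where left-invariant forms are the constant-coefficient forms $\de x^I$, which are exactly the harmonic ones, so there the left-invariant cohomology already equals $H^\bullet_{dR}$.
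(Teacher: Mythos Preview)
The paper does not prove this theorem: it is stated as a cited classical result of Nomizu and Hattori, with no accompanying argument. So there is no ``paper's own proof'' against which to compare your proposal. What can be said is that your injectivity half is exactly the averaging argument the authors themselves use later (in the proof of Theorem~\ref{thm:nomizu-type}) via Belgun's symmetrization map, so that part is fully in line with the paper's toolkit.

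Your surjectivity sketch follows the genuine Nomizu/Hattori strategy --- induction via a fibration and a comparison of the Leray--Serre and Hochschild--Serre spectral sequences --- and is correct in outline. The one place where the write-up is thin is the identification of $E_2$-pages: you say that under complete solvability ``the relevant representations are, up to real scaling, unipotent'' and invoke a Nomizu-type vanishing, but this is precisely the technical heart of Hattori's argument and deserves more than a sentence. Concretely, one needs that for a $1$-dimensional real representation $\R_\lambda$ of $\Z^k$ coming from a real weight $\lambda$ of the abelian quotient, the local-system cohomology $H^\bullet(T^k;\R_\lambda)$ agrees with the Lie-algebra cohomology $H^\bullet(\R^k;\R_\lambda)$ (both vanish when $\lambda\neq 0$, both equal $\wedge^\bullet(\R^k)^*$ when $\lambda=0$); complete solvability is what guarantees that all weights appearing in the composition series of $H^q(\mathfrak{n})$ are real, so that one never meets the rotational case where the two computations diverge. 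If you are going to present this as a proof rather than a sketch, that step should be made explicit.
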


Let $J$ be a left-invariant almost-complex structure on $M$, namely, an almost-complex structure on $M$ induced by an almost-complex structure on $G$ that is invariant under the action of $G$ on itself given by left-translations.
Given $p,q\in\N$, denote by
$$ H^{(p,q),(q,p)}_J(\mathfrak{g})_\R \;:=\; \left\{\mathfrak{a}=\left[\alpha\right]\in H^\bullet\left(\wedge^\bullet\mathfrak{g}^*,\, \de\right) \st \alpha \in \wedge^{(p,q),(q,p)}_J\mathfrak{g}^* \right\} \;\subseteq\; H^\bullet_{dR}(M;\R) $$
the subgroup (see, e.g., \cite[Lemma 9]{console-fino}) of $H^\bullet_{dR}(M;\R)$ that consists of classes admitting a left-invariant representative of type $(p,q)+(q,p)$, where $\wedge^{(p,q),(q,p)}_J\mathfrak{g}^* := \left(\wedge^{p,q}\left(\mathfrak{g}\otimes_\R\C\right)^*\oplus \wedge^{q,p}\left(\mathfrak{g}\otimes_\R\C\right)^*\right) \cap \wedge^\bullet\mathfrak{g}^*$.

Using Belgun's symmetrization trick, \cite[Theorem 7]{belgun}, one can prove the following Nomizu-type result, which relates the subgroups $H^{(p,q),(q,p)}_J\left(M\right)_\R$ with their left-invariant part $H^{(p,q),(q,p)}_J(\mathfrak{g})_\R$.

\begin{thm}\label{thm:nomizu-type}
Let $M=\left.\Gamma\right\backslash G$ be a solvmanifold endowed with a left-invariant almost-complex structure $J$, and denote the Lie algebra naturally associated to $G$ by $\mathfrak{g}$. For any $p,q\in\N$, the map
$$ j\colon H^{(p,q),(q,p)}_J(\mathfrak{g})_\R \to H^{(p,q),(q,p)}_J(M)_\R $$
induced by left-translations is injective, and, if $H_{dR}^\bullet\left(\wedge^\bullet\mathfrak{g}^*,\, \de\right) \simeq H^\bullet_{dR}(M;\R)$ (for instance, if $M$ is a completely-solvable solvmanifold), then $j\colon H^{(p,q),(q,p)}_J(\mathfrak{g})_\R \to H^{(p,q),(q,p)}_J(M)_\R$ is in fact an isomorphism.
\end{thm}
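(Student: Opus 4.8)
The plan is to use Belgun's symmetrization trick, as suggested in the text preceding the statement. Recall that $G$ admits a left-invariant volume form, and by Belgun (see \cite[Theorem 7]{belgun}) one can average any differential form $\beta$ on $M = \Gamma\backslash G$ over the compact homogeneous space to produce a left-invariant form $\mu(\beta) \in \wedge^\bullet\mathfrak{g}^*$; this symmetrization map $\mu\colon \wedge^\bullet M \to \wedge^\bullet\mathfrak{g}^*$ is linear, commutes with $\de$, restricts to the identity on left-invariant forms, and — crucially — commutes with the (pointwise, algebraic) action of the left-invariant almost-complex structure $J$ on forms, hence preserves the type decomposition: $\mu\left(\wedge^{(p,q),(q,p)}_J(M)_\R\right) \subseteq \wedge^{(p,q),(q,p)}_J\mathfrak{g}^*$.

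First I would establish injectivity of $j$, which holds for \emph{every} solvmanifold with no assumption beyond left-invariance of $J$. Suppose $\mathfrak{a} = [\alpha] \in H^{(p,q),(q,p)}_J(\mathfrak{g})_\R$ with $\alpha \in \wedge^{(p,q),(q,p)}_J\mathfrak{g}^*$ left-invariant and $\de$-closed, and suppose $j(\mathfrak{a}) = 0$, i.e.\ $\alpha = \de\gamma$ for some global form $\gamma \in \wedge^\bullet M$. Applying $\mu$ and using that $\mu$ commutes with $\de$ and fixes left-invariant forms gives $\alpha = \mu(\alpha) = \mu(\de\gamma) = \de\,\mu(\gamma)$, and $\mu(\gamma) \in \wedge^\bullet\mathfrak{g}^*$ is left-invariant; hence $\alpha$ is already a coboundary inside $\left(\wedge^\bullet\mathfrak{g}^*,\,\de\right)$, so $\mathfrak{a} = 0$ in $H^{(p,q),(q,p)}_J(\mathfrak{g})_\R$. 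This proves $j$ is injective.

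Next I would prove surjectivity under the hypothesis $H^\bullet\left(\wedge^\bullet\mathfrak{g}^*,\,\de\right) \simeq H^\bullet_{dR}(M;\R)$ (which, by Theorem \ref{thm:nomizu-hattori}, holds for completely-solvable solvmanifolds). Let $\mathfrak{b} = [\beta] \in H^{(p,q),(q,p)}_J(M)_\R$, so $\beta \in \mathcal{Z}^{(p,q),(q,p)}_J$ is a global $\de$-closed form of pure type $(p,q)+(q,p)$. Set $\beta' := \mu(\beta)$. Then $\beta'$ is left-invariant, $\de\beta' = \mu(\de\beta) = 0$, and $\beta' \in \wedge^{(p,q),(q,p)}_J\mathfrak{g}^*$ because $\mu$ preserves the $J$-type decomposition. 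It remains to check that $[\beta'] = [\beta]$ in $H^\bullet_{dR}(M;\R)$: this is exactly the content of the isomorphism $H^\bullet\left(\wedge^\bullet\mathfrak{g}^*,\,\de\right) \simeq H^\bullet_{dR}(M;\R)$ together with the standard fact (again Belgun) that $\mu$ induces the inverse of the natural inclusion on cohomology, so $[\mu(\beta)] = [\beta]$ for every closed $\beta$. Hence $\mathfrak{b} = j\!\left([\beta']\right)$ lies in the image of $j$, and combined with injectivity, $j$ is an isomorphism. In particular, since $\dim_\R \wedge^{(2,0),(0,2)}_J\mathfrak{g}^* = \binom{n}{2}\cdot 2 \cdot \tfrac12 \cdot 2 = n(n-1)$ (the real dimension of the anti-invariant part of $\wedge^2\mathfrak{g}^*$), we get $\dim_\R H^-_J(M) = \dim_\R H^{(2,0),(0,2)}_J(\mathfrak{g})_\R \leq n(n-1)$.

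The only delicate point is verifying that Belgun's symmetrization $\mu$ genuinely commutes with the action of $J$ on forms and hence preserves the bidegree decomposition; this is where left-invariance of $J$ is used in an essential way — the action of $J$ on $\wedge^\bullet$ is $\GL$-equivariant and fiberwise algebraic, and $\mu$ is built by integrating the $G$-translates of a form against a bi-invariant measure coming from the unimodularity available on (the relevant covering of) $G$, so the two operations commute pointwise. Everything else is bookkeeping: closedness is preserved because $\mu\circ\de = \de\circ\mu$, the type is preserved by the previous remark, and the cohomology class is unchanged by the standard averaging argument. I do not expect any genuine obstacle beyond carefully invoking \cite[Theorem 7]{belgun} with the almost-complex refinement; the completely-solvable hypothesis enters only through Theorem \ref{thm:nomizu-hattori} to guarantee that left-invariant closed forms compute the full de Rham cohomology, which is what upgrades the injection to an isomorphism.
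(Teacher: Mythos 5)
Your proposal is correct and follows essentially the same route as the paper: Belgun's symmetrization $\mu$ (which exists because $G$ is unimodular, by Milnor's lemma, since it admits a lattice) commutes with $\de$ and with $J$ and fixes left-invariant forms, so $\mu\circ j=\id$ gives injectivity, and the hypothesis $H^\bullet\left(\wedge^\bullet\mathfrak{g}^*,\,\de\right)\simeq H^\bullet_{dR}(M;\R)$ makes $\mu$ the inverse of the inclusion on de Rham cohomology, upgrading $j$ to an isomorphism. Your unfolded version of the injectivity step ($\alpha=\de\gamma\Rightarrow\alpha=\de\,\mu(\gamma)$) and your surjectivity step via $[\mu(\beta)]=[\beta]$ are exactly the paper's commutative-diagram argument written out.
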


\begin{proof}
Since $J$ is left-invariant, left-translations induce the map $j\colon H^{(p,q),(q,p)}_J(\mathfrak{g})_\R \to H^{(p,q),(q,p)}_J(M)_\R$.

Since, by J. Milnor's Lemma \cite[Lemma 6.2]{milnor}, $G$ is unimodular, one can take in particular a bi-invariant volume form $\eta$ on $M$ such that $\int_M\eta=1$. Consider the F.~A. Belgun symmetrization map in \cite[Theorem 7]{belgun}, namely,
$$ \mu\colon \wedge^\bullet M \to \wedge^\bullet{\mathfrak{g}}^*\;,\qquad \mu(\alpha)\;:=\;\int_M \alpha\lfloor_m \, \eta(m) \;.$$

Since $\mu$ commutes with $\de$ by \cite[Theorem 7]{belgun}, it induces the map $\mu\colon H^\bullet_{dR}(M;\R) \to H^\bullet\left(\wedge^\bullet\mathfrak{g}^*,\, \de\right)$, and, since $\mu$ commutes with $J$, it preserves the bi-graduation; therefore it induces the map $\mu\colon H^{(p,q),(q,p)}_J(M) \to H^{(p,q),(q,p)}_J(\mathfrak{g})_\R$. Moreover, since $\mu$ is the identity on the space of left-invariant forms by \cite[Theorem 7]{belgun}, we get the commutative diagram
$$
\xymatrix{
H^{(p,q),(q,p)}_{J}(\mathfrak{g})_\R \ar[r]^{j} \ar@/_1.5pc/[rr]_{\id} & H^{(p,q),(q,p)}_{J}(M)_\R \ar[r]^{\mu} & H^{(p,q),(q,p)}_{J}(\mathfrak{g})_\R
}
$$
hence $j\colon H^{(p,q),(q,p)}_{J}(\mathfrak{g})_\R \to H^{(p,q),(q,p)}_{J}(M)_\R$ is injective, and $\mu\colon H^{(p,q),(q,p)}_{J}(M)_\R \to H^{(p,q),(q,p)}_{J}(\mathfrak{g})_\R$ is surjective.

Furthermore, when $H^\bullet\left(\wedge^\bullet\mathfrak{g}^*,\, \de\right) \simeq H^\bullet_{dR}(M;\R)$ (for instance, when $M$ is a completely-solvable solvmanifold, by A. Hattori's theorem \cite[Theorem 4.2]{hattori}, see Theorem \ref{thm:nomizu-hattori}), since $\mu\lfloor_{\wedge^\bullet{\mathfrak{g}}^*} = \id\lfloor_{\wedge^\bullet{\mathfrak{g}}^*}$ by \cite[Theorem 7]{belgun}, we get that $\mu\colon H^\bullet_{dR}(M;\R) \to H^\bullet\left(\wedge^\bullet\mathfrak{g}^*,\, \de\right)$ is the identity map, and hence $\mu\colon H^{(p,q),(q,p)}_{J}(M)_\R \to H^{(p,q),(q,p)}_{J}(\mathfrak{g})_\R$ is also injective, and hence an isomorphism.
\end{proof}

In particular, if $M=\left.\Gamma\right\backslash G$ is a $2n$-dimensional completely-solvable solvmanifold endowed with a left-invariant almost-complex structure $J$, then
$$ \dim_\R H^-_J(M) \;\leq\; n\, (n-1) \qquad \text{ and } \qquad \dim_\R H^+_J(M) \;\leq\; n^2 \;;$$
this provides a partial negative answer to Question \ref{question:large}.


\begin{thebibliography}{10}
\bibitem{angella-tomassini}
D. Angella, A. Tomassini, On cohomological decomposition of almost-complex manifolds and deformations, {\em J. Symplectic Geom.} \textbf{9} (2011), no.~3, 403--428.

\bibitem{angella-tomassini-1}
D. Angella, A. Tomassini, On the cohomology of almost-complex manifolds, {\em Internat. J. Math.} \textbf{23} (2012), no.~2, 1250019, 25 pp.

\bibitem{angella-tomassini-3}
D. Angella, A. Tomassini, On the $\del\delbar$-Lemma and Bott-Chern cohomology, to appear in {\em Invent. Math.}, \textsc{doi}: \texttt{10.1007/s00222-012-0406-3}.

\bibitem{belgun}
F.~A. Belgun, On the metric structure of non-K\"ahler complex surfaces, {\em Math. Ann.} \textbf{317} (2000), no.~1, 1--40.

\bibitem{benson-gordon1}
Ch. Benson, C.~S. Gordon, K\"ahler and symplectic structures on nilmanifolds, \emph{Topology} \textbf{27} (1988), no.~4, 513--518.

\bibitem{brylinski}
J.-L. Brylinski, A differential complex for Poisson manifolds, {\em J. Differential Geom.} \textbf{28} (1988), no.~1, 93--114.

\bibitem{cavalcanti}
G.~R. Cavalcanti, New aspects of the $\de\de^{\mathrm{c}}$-lemma, Oxford University D. Phil thesis, \texttt{arXiv:math/0501406v1 [math.DG]}.

\bibitem{console-fino}
S. Console, A. Fino, Dolbeault cohomology of compact nilmanifolds, {\em Transform. Groups} \textbf{6} (2001), no.~2, 111--124.

\bibitem{d-g-m-s}
P. Deligne, Ph. Griffiths, J. Morgan, D.  Sullivan, Real homotopy theory of K\"ahler manifolds, {\em Invent. Math.} \textbf{29} (1975), no.~3, 245--274.

\bibitem{donaldson}
S.~K. Donaldson, \textit{Two-forms on four-manifolds and elliptic equations}, Inspired by S. S. Chern, 153--172, Nankai Tracts Math., \textbf{11}, World Sci. Publ., Hackensack, NJ, 2006.

\bibitem{draghici-li-zhang}
T. Dr\v{a}ghici, T.-J. Li, W.  Zhang, Symplectic forms and cohomology decomposition of almost complex four-manifolds, {\em Int. Math. Res. Not. IMRN} {\bfseries 2010} (2010), no.~1, 1--17.

\bibitem{draghici-li-zhang-1}
T. Dr\v{a}ghici, T.-J. Li, W. Zhang, On the $J$-anti-invariant cohomology of almost complex $4$-manifolds, {\em Quart. J. Math.} \textbf{00} (2011), \textsc{doi}: \texttt{10.1093/qmath/har0341}, 1--29.

\bibitem{draghici-li-zhang-2}
T. Dr\v{a}ghici, T.-J. Li, W. Zhang, Geometry of tamed almost complex structures in dimension $4$, ICCM2010 Proceedings, AMS/IP Vol. \textbf{51}, 2012, 233--251.

\bibitem{fernandez-munoz-santisteban}
M. Fern{\'a}ndez, V. Mu{\~n}oz, J.~A. Santisteban, Cohomologically {K}\"ahler manifolds with no {K}\"ahler metrics, {\em Int. J. Math. Sci.} \textbf{2003} (2003), no.~52, 3315--3325.

\bibitem{fino-tomassini}
A. Fino, A. Tomassini, On some cohomological properties of almost complex manifolds, {\em J. of Geom. Anal.} \textbf{20} (2010), no.~1, 107--131.

\bibitem{gualtieri}
M. Gualtieri, Generalized complex geometry, {\em Ann. of Math. (2)}  \textbf{174} (2011), no.~1, 75--123.

\bibitem{hasegawa-89}
K. Hasegawa, Minimal models of nilmanifolds, {\em Proc. Am. Math. Soc.} {\bf 106} (1989), no.~1, 65--71.

\bibitem{hasegawa-06}
K. Hasegawa, A note on compact solvmanifolds with K\"ahler structures, {\em Osaka J. Math.} {\bf 43} (2006), no.~1, 131–-135.

\bibitem{hattori}
A. Hattori, Spectral sequence in the de {R}ham cohomology of fibre bundles, {\em J. Fac. Sci. Univ. Tokyo Sect. I} \textbf{8} (1960), no.~1960, 289--331.

\bibitem{li-zhang}
T.-J. Li, W. Zhang, Comparing tamed and compatible symplectic cones and cohomological properties of almost complex manifolds, {\em Comm. Anal. Geom.} {\bf 17} (2009), no.~4, 651--683

\bibitem{malcev}
A.~I. Mal'tsev, On a class of homogeneous spaces, {\em Izvestiya Akad. Nauk. SSSR. Ser. Mat.} \textbf{13} (1949), no.~1, 9--32, English translation in {\em Amer. Math. Soc. Translation: Series 1} \textbf{1951} (1951), no.~39, 193--206.

\bibitem{mathieu}
O. Mathieu, Harmonic cohomology classes of symplectic manifolds, {\em Comment. Math. Helv.} \textbf{70} (1995), no.~1, 1--9.

\bibitem{merkulov}
S.~A. Merkulov, Formality of canonical symplectic complexes and Frobenius manifolds, {\em Int. Math. Res. Not.} \textbf{1998} (1998), no.~14, 727--733.

\bibitem{milnor}		
J. Milnor, Curvature of left-invariant metrics on Lie groups, {\em Adv. Math.} {\bf 21} (1976), no.~3, 293--329.

\bibitem{nomizu}
K. Nomizu, On the cohomology of compact homogeneous spaces of nilpotent Lie groups, {\em Ann. of Math.} (2) \textbf{59} (1954), no.~3, 531--538.

\bibitem{streets-tian}
J. Streets, G. Tian, A parabolic flow of pluriclosed metrics, {\em Int. Math. Res. Not. IMRN} {\bfseries 2010} (2010), no.~16, 3101--3133.

\bibitem{yan}
D. Yan, Hodge structure on symplectic manifolds, {\em Adv. Math.} \textbf{120} (1996), no.~1, 143--154.

\end{thebibliography}
\end{document}